\newtheorem{theorem}{Theorem}[section]
\newtheorem{corollary}[theorem]{Corollary}
\newtheorem{proposition}[theorem]{Proposition}
\newtheorem{lemma}[theorem]{Lemma}
 \newtheorem{conjecture}[theorem]{Conjecture}
\numberwithin{equation}{section}
\theoremstyle{definition}
\newtheorem{definition}[theorem]{Definition}
\newtheorem{problem}[theorem]{Problem}
\theoremstyle{remark}
\newtheorem*{remark*}{Remark}
\newcommand{\1}[1]{{\mathbbm{1}\mkern -1.5mu}{\{#1\}}}
\newcommand{\2}[1]{{\mathbbm{1}}_{#1}}
\newcommand{\R}{{\mathbb R}}
\newcommand{\Z}{{\mathbb Z}}
\newcommand{\N}{{\mathbb N}}
\newcommand{\ZP}{{\mathbb Z}_+}
\newcommand{\RP}{{\mathbb R}_+}
\newcommand{\Sp}[1]{{\mathbb S}^{#1}}
\DeclareMathOperator{\Exp}{\mathbb{E}}
\renewcommand{\Pr}{{\mathbb P}}
\DeclareMathOperator{\trace}{tr}
\newcommand{\eps}{\varepsilon}
\newcommand{\re}{{\mathrm{e}}}
\newcommand{\rc}{{\mathrm{c}}}
\newcommand{\cC}{{\mathcal C}}
\newcommand{\cF}{{\mathcal F}}
\newcommand{\cG}{{\mathcal G}}
\newcommand{\cS}{{\mathcal S}}
\newcommand{\cT}{{\mathcal T}}
\newcommand{\cX}{{\mathcal X}}
\newcommand{\cY}{{\mathcal Y}}
\newcommand{\as}{\ \text{a.s.}}
\newcommand{\io}{\ \text{i.o.}}
\newcommand{\ubar}[1]{\underline{#1\mkern-4mu}\mkern4mu }
\newcommand{\omu}{{\bar \mu}}
\newcommand{\umu}{{\ubar \mu}}
\newcommand{\cCs}{{\mathcal C}_{s}}
\newcommand{\bigmid}{\; \bigl| \;}
\newcommand{\Bigmid}{\; \Bigl| \;}
\newcommand{\biggmid}{\; \biggl| \;}
\newcommand{\tra}{{\scalebox{0.6}{$\top$}}}
\newlist{myenumi}{description}{10}
\setlist[myenumi]{labelindent=\parindent, leftmargin=*, align=left, itemsep=1pt, parsep=0pt}
\setlist[myenumi]{leftmargin=0pt}
\def\namedlabel#1#2{\begingroup  
    (#2)%
    \def\@currentlabel{#2}%
    \phantomsection\label{#1}\endgroup
}
\title{Cutpoints of non-homogeneous random walks}
\author{Chak Hei Lo\footnote{Department of Statistical Science, University College London, Gower Street, London WC1E~6BT, UK.} \and Mikhail V.\ Menshikov\footnote{Durham University, Department of Mathematical Sciences, South Road, Durham DH1~3LE, UK.} \and Andrew R.\ Wade\footnotemark[2]} 
\date{\today}
\begin{document}

\maketitle

\begin{abstract}
We give conditions under which near-critical stochastic processes on the half-line
have infinitely many or finitely many cutpoints, generalizing existing results
on nearest-neighbour random walks to adapted processes with bounded increments
satisfying appropriate conditional increment moments conditions. We apply one of these results
to deduce that a class of transient zero-drift Markov chains in $\R^d$, $d \geq 2$,
possess infinitely many separating annuli, generalizing previous results on
spatially homogeneous random walks.
\end{abstract}

\medskip

\noindent
{\em Key words:} Non-homogeneous random walk; cutpoints; cut times; Lamperti's problem; elliptical random walk.

\medskip

\noindent
{\em AMS Subject Classification:} 
60J05 (Primary) 60J10, 60G50 (Secondary).

\section{Introduction and main results}
\label{sec:introduction}

In this paper we study separation properties of trajectories of transient, near-critical, 
discrete-time stochastic processes in~$\RP$ and~$\R^d$ satisfying certain increment moment conditions.
A point $x$ of $\RP$   is a \emph{cutpoint} for a given trajectory of a stochastic process if, roughly speaking, 
the process visits $x$ and never returns to $[0,x)$ after its first entry into $(x,\infty)$. A similar notion is applicable in higher dimensions.
Under mild conditions, cutpoints may appear only in the transient case, when trajectories
escape to infinity.
The more cutpoints that a process has, the `more transient' it is, in a certain sense.
 A fundamental question is: does a transient process have  
infinitely many cutpoints, or not? 

For simple symmetric random walk (SSRW) on $\Z^d$, $d \geq 3$, this question goes back to Erd\H os and Taylor~\cite{et},
who proved that cutpoints have a positive density in the trajectory if $d \geq 5$. Much later,
it was shown that transient SSRW has infinitely many cutpoints in any dimension $d \geq 3$, by Lawler~\cite{lawlerbook} (for $d \geq 4$) and James and Peres~\cite{jp}.
Recently, examples of transient Markov chains on $\ZP$ with finitely many cutpoints were produced~\cite{jlp,cfr}:
these processes are nearest-neighbour birth-and-death chains that are `less transient' than SSRW on $\Z^3$,
in the `critical window' identified in~\cite{mai}. A recent extension to processes whose jumps are size 1 to the left
and size 2 to the right can be found in~\cite{wang}.

We examine the phase transition in the quantity of cutpoints
from the point of view of relatively general processes on $\RP$
in the manner of Lamperti~\cite{lamp1,lamp3}, and from the point of view of many-dimensional Markov chains (cf.~\cite{gmmw}).
We  (i) extend, in part, some one-dimensional results that were restricted to nearest-neighbour Markov chains on $\ZP$~\cite{cfr}
to somewhat more general (not always Markov) processes on $\RP$ with bounded jumps satisfying Lamperti-type conditions, and
(ii) extend some many-dimensional results that were restricted to homogeneous random walks in $\R^d$, $d \geq 3$~\cite{jp}
to some transient zero-drift non-homogeneous random walks on $\R^d$, $d \geq 2$. Indeed, it is the fact that we
can undertake relevant parts of~(i) without assuming the Markov property that enables us to apply our results to higher dimensions in~(ii).

Suppose that $X = (X_n; n \in \ZP)$ is a discrete-time stochastic process adapted to a filtration $(\cF_n; n \in \ZP)$ and taking values in a measurable $\cX \subset \RP$ 
with $\inf \cX = 0$ and $\sup \cX = \infty$. We permit $\cF_0$ to be rich enough that $X_0$ is random.
For a measurable subset $B$ of $\RP$, let
$|B|$ denote the Lebesgue measure of $B$.
For a set $A$, let $\# A$ denote the number of elements of~$A$.

\begin{definition} \phantomsection
\label{def:cutpoints}
\begin{myenumi}
\setlength{\itemsep}{0pt plus 1pt}
\item[{\rm (i)}]
The point $x \in \RP$ is a \emph{cutpoint} for $X$ if there exists $n_0 \in \ZP$ such that $X_n \leq x$ for all $n \leq n_0$,
$X_{n_0} = x$, and $X_n > x$ for all $n > n_0$.  
\item[{\rm (ii)}]
The point $x \in \RP$ is a \emph{strong cutpoint} for $X$ if there exists $n_0 \in \ZP$ such that $X_n < x$ for all $n < n_0$,
$X_{n_0} = x$, and $X_n > x$ for all $n > n_0$.  
\item[{\rm (iii)}]
For $h >0$ and $k \in \ZP$,  an interval $I \subset \RP$ is an $(h,k)$ \emph{cut interval} if 
$| I | \geq h$, if there are at least $k$ points of  $X_0, X_1, \ldots$ in the interior of $I$,
and every point of $X_0, X_1, \ldots$ in the interior of $I$ is a strong cutpoint for $X$.
\end{myenumi}
\end{definition}
The terminology in (i) and~(ii) follows~\cite{cfr}, although similar definitions appeared earlier. We discuss some other related notions in Section~\ref{sec:relations} below. 
Let $\cC$ denote the set of cutpoints, and let $\cCs$ denote the set of strong cutpoints;
the random sets $\cC$ and $\cCs$ are at most countable, with $\cCs \subseteq \cC$. 

In this paper we give conditions under which 
either (i) $\# \cCs = \infty$,
or (ii) $\# \cC < \infty$.
The example of a trajectory on $\ZP$ which follows the sequence $(0,0,1,1,2,2,\ldots)$
shows that it is, in principle, possible to have $\# \cC = \infty$ and $\# \cCs < \infty$,
but our results show that such behaviour is excluded for the models that we consider (with probability~1);
see also Conjecture~1.1 of~\cite[p.~628]{cfr}.

We will assume the following.
\begin{description}
\item
[\namedlabel{ass:bounded-jumps}{B}]
Suppose that there exists a constant $B < \infty$ such that, for all $n \in \ZP$,
\[
\Pr ( | X_{n+1} - X_n | \leq B   ) =1 . \]  
\item
[\namedlabel{ass:nonc}{N}]
Suppose that $\limsup_{n \to \infty} X_n = + \infty$, a.s.
\end{description}

Assumption~\eqref{ass:bounded-jumps} is bounded increments.
The non-confinement condition~\eqref{ass:nonc} is implied by suitable notions of irreducibility or
ellipticity (see~e.g.~\cite[\S\S3.3, 3.6]{mpw}); in particular,
condition~\eqref{ass:nonc} holds
whenever~$X$ is an irreducible, time-homogeneous Markov chain
on a locally finite state space $\cX \subseteq \RP$. (A set $\cX \subseteq \RP$ is locally finite
if $\# ( \cX \cap B ) < \infty$ for every bounded $B \subseteq \RP$.)

For $n \in \ZP$ set
$\Delta_n := X_{n+1}-X_n$, the increment of the process.
We will impose conditions on the conditional increment moments $\Exp ( \Delta^k_n \mid \cF_n )$, $k=1,2$, that
are required to hold uniformly (in $n$ and a.s.) on $\{X_n > x\}$ for large enough $x$. Note that the existence of $\Exp ( \Delta^k_n \mid \cF_n )$ for all $n \in \ZP$ is guaranteed by~\eqref{ass:bounded-jumps}. 
These conditions will be formulated in terms of (measurable) functions $\umu_k, \omu_k : \cX \to \R$
such that
\begin{equation}
\label{eq:mu-properties}
\umu_k (X_n) \le \Exp(  \Delta^k_n \mid \cF_n ) \le \omu_k (X_n), \as
\end{equation}
for all $n \in \ZP$. 
Of course, condition~\eqref{ass:bounded-jumps} ensures that such $\umu_k$, $\omu_k$ exist; our results
are stronger the tighter one makes the bounds in~\eqref{eq:mu-properties},
and are more complete  if $\umu_k (x)$ and $\omu_k (x)$ do not differ by much for large $x$. 
One near-optimal way of defining $\umu_k, \omu_k$ 
satisfying~\eqref{eq:mu-properties}
is described in~\cite[\S 3.3]{mpw}.
Note that if $X$ is a time-homogeneous Markov chain on $\cX$, then~$\Exp ( \Delta^k_n \mid \cF_n )
 = \mu_k (X_n)$ a.s.~for some measurable $\mu_k : \cX \to \R$
with $\mu_k(x) =  \Exp ( \Delta^k_n \mid X_n = x )$,
and so in~\eqref{eq:mu-properties} we may take
$\umu_k \equiv \omu_k \equiv \mu_k$. Thus in the Markovian case
one may replace $\umu_k$ and $\omu_k$ by $\mu_k$ in the statements that
follow. 

A mild additional assumption that we will often need is the following.
\begin{description}
\item
[\namedlabel{ass:variance}{V}]
Suppose that $\liminf_{x \to \infty} \umu_2(x)  > 0$.
\end{description}

Our first result gives a sufficient condition to have $\# \cCs = \infty$,
and gives a lower bound on the density of cutpoints. That the hypotheses of Theorem~\ref{thm:infcp}
imply $X_n \to \infty$ a.s.~is a result of Lamperti~\cite{lamp1}.

\begin{theorem}
\label{thm:infcp}
Suppose that~\eqref{ass:bounded-jumps},~\eqref{ass:nonc}, and~\eqref{ass:variance} hold. Suppose also that
\begin{align} 
\label{eq:cond1}
\liminf_{x \to \infty} \bigl( 2x \umu_1(x) -\omu_2(x) \bigr) &>0,  \\
\label{eq:cond2}
\limsup_{x \to \infty} \bigl( x \omu_1(x) \bigr) &< \infty.
\end{align}
Then for any $h \in (0,\infty)$ and $k \in \ZP$, a.s.~there exist infinitely many disjoint $(h,k)$ cut intervals. 
In particular $\Pr ( \#\cCs = \infty ) = 1$. 
Moreover, if $\Exp X_0 < \infty$ then there is a constant $c>0$ such that $\Exp \# ( \cCs \cap [0,x] ) \geq c \log x$
for all $x$ sufficiently large.
\end{theorem}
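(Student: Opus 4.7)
My plan is to combine a Lamperti-Lyapunov return-probability estimate with a running-maxima analysis, then assemble the pieces via dyadic-shell counting. Since \eqref{eq:cond1} is Lamperti's transience condition, $X_n \to \infty$ a.s.\ by~\cite{lamp1}. For the return estimate I take $W(x) = x^{-\alpha}$; a second-order Taylor expansion (valid by \eqref{ass:bounded-jumps}) together with \eqref{eq:mu-properties} and conditions \eqref{eq:cond1}, \eqref{eq:cond2} shows that, for $\alpha > 0$ sufficiently small,
\[
\Exp[W(X_{n+1}) - W(X_n) \mid \cF_n] \;\leq\; \tfrac{1}{2}\alpha X_n^{-\alpha-2}\bigl((\alpha+1)\omu_2(X_n) - 2 X_n \umu_1(X_n)\bigr) + O(X_n^{-\alpha-3}) \;\leq\; -c X_n^{-\alpha-2}
\]
on $\{X_n > x_0\}$. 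Hence $W(X_n)$ is a supermartingale outside a compact set, and optional stopping yields $\Pr_z(X \text{ ever re-enters } [0,y]) \leq (y/z)^{\alpha}$ for $x_0 \leq y < z$.

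Next I lower-bound the probability that a strict running maximum is a strong cutpoint. From \eqref{ass:bounded-jumps}, \eqref{ass:variance}, and \eqref{eq:cond2} there exist $\eps, p_0 > 0$ such that $\Pr(\Delta_n > \eps \mid \cF_n) \geq p_0$ uniformly for $X_n$ large: otherwise a weak-limit argument on $[-B, B]$ (along a subsequence with $X_{n_k} \to \infty$ and $\eps_k \downarrow 0$) would produce a conditional law supported on $[-B, 0]$ with mean $0$ (forced by \eqref{eq:cond2}), hence a Dirac at $0$, contradicting \eqref{ass:variance}. Let $\sigma_j$ denote the $j$-th strict running-maximum time and set $M_j := X_{\sigma_j}$; by definition $M_j \in \cCs$ iff $X_n > M_j$ for all $n > \sigma_j$. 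Conditional on $\cF_{\sigma_j}$ (with $M_j$ large), with probability $\geq p_0$ we have $\Delta_{\sigma_j} > \eps$, placing the process at $X_{\sigma_j+1} > M_j + \eps$; the Lyapunov bound then gives non-return probability $\geq 1 - (M_j/(M_j+\eps))^\alpha \geq c_1/M_j$. Combining, $\Pr(M_j \in \cCs \mid \cF_{\sigma_j}) \geq c/M_j$. The $(h,k)$-cut-interval strengthening requires $k$ consecutive increments falling in $(\eps, h/k)$ before the final escape, producing $k$ successive running maxima clustered in an interval of width $h$; each such increment occurs with bounded-below conditional probability by the same weak-compactness reasoning.

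Bounded jumps force $0 < M_{j+1} - M_j \leq B$, so at least $\lfloor y/(2B)\rfloor$ running maxima lie in $[y, 2y]$ a.s.\ once the process has reached $2y$, which happens by transience. Therefore
\[
\Exp \#(\cCs \cap [y, 2y]) \;\geq\; \Exp \!\!\sum_{j :\, M_j \in [y, 2y]} \!\frac{c}{M_j} \;\geq\; \frac{c}{2y}\cdot \frac{y}{2B} \;=\; \frac{c}{4B}.
\]
Summing over disjoint dyadic shells $[2^k y_0, 2^{k+1} y_0]$ up to $x$ gives $\Exp \#(\cCs \cap [0,x]) \geq c' \log x$; the hypothesis $\Exp X_0 < \infty$ is used via a Markov bound on $\Pr(X_0 > y)$ to control the shells missed when $X_0$ is initially large. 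The almost-sure statement $\#\cCs = \infty$ follows from $\sum_j \Pr(M_j \in \cCs \mid \cF_{\sigma_j}) \geq \sum_j c/M_j \geq \sum_j c/(Bj + M_0) = +\infty$ a.s., via a conditional Borel-Cantelli argument applied to the $\cF_{\sigma_j + N}$-measurable truncations $E_j^N = \{X_{\sigma_j + i} > M_j,\ i=1,\ldots,N\}$ of the cutpoint event, with $N \to \infty$.

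The main obstacle I expect is precisely the last step: the event $\{M_j \in \cCs\}$ depends on the infinite future of $X$ and is therefore not $\cF_{\sigma_j + N}$-measurable for any fixed $N$, so the passage from $E_j^N$ to $\{M_j \in \cCs\}$ must be controlled. I intend to handle this by choosing $N = N(M_j)$ growing appropriately with $M_j$ and using the Lyapunov return estimate, together with transience, to make the discrepancy $\Pr(E_j^N \setminus \{M_j \in \cCs\} \mid \cF_{\sigma_j + N})$ summably small in $j$.
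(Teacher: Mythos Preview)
Your Lyapunov return estimate and the dyadic-shell computation for the expectation bound are essentially correct and parallel the paper's Lemma~\ref{lem:proest} and~\eqref{eq:cutpoints-lower-bound}. Two minor points: your weak-compactness argument for $\Pr(\Delta_n>\eps\mid\cF_n)\geq p_0$ is not valid here, because in the non-Markov setting the conditional law of $\Delta_n$ given $\cF_n$ is a \emph{random} measure, not a function of $X_n$, so there is no deterministic sequence of laws to take a limit of (the paper's Lemma~\ref{lem:ellipticity} gives a direct moment argument). And for the $(h,k)$ cut interval you require increments in $(\eps,h/k)$, but nothing in the hypotheses bounds that probability below (the increments could be supported on $\{-B,B\}$); moreover the definition asks for interval length at \emph{least} $h$, not at most $h$. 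The paper instead takes $2\ell$ steps of size $>\eps$ with $\ell\eps>\max(h,Bk)$, so the first $k$ steps automatically land in $[x,x+\ell\eps]$ by~\eqref{ass:bounded-jumps}.

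The real gap is the almost-sure conclusion. Divergence of $\sum_j \Pr(M_j\in\cCs\mid\cF_{\sigma_j})$ does \emph{not} imply $\{M_j\in\cCs\}$ infinitely often: L\'evy's conditional Borel--Cantelli lemma requires the $j$th event to lie in the next $\sigma$-algebra of the filtration, whereas $\{M_j\in\cCs\}$ is a tail event. Your truncations $E_j^N$ do not close this. For any fixed $N$, $E_j^N$ infinitely often gives no cutpoints. Letting $N=N(j)\to\infty$ destroys adaptedness, since $\sigma_{j+1}$ may equal $\sigma_j+1$; subsampling to restore it thins the sum. And the discrepancy $E_j^N\setminus\{M_j\in\cCs\}$ is not summably small from your estimates: on $E_j^N$ you only know $X_{\sigma_j+N}>M_j$, possibly by an arbitrarily small amount, so the conditional return probability need not be small. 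Underlying all of this is strong positive correlation: if $M_j\in\cCs$ then $\sigma_{j+1}=\sigma_j+1$ and $\{M_j,M_{j+1}\in\cCs\}$ has probability of order $1/M_j$, not $1/M_j^2$. Any valid argument must therefore control two-point probabilities. The paper does exactly this: it works with spatially indexed events $A_x$, proves the pair bound $\Pr(A_x\cap A_{x+y}\mid\cF_n)\leq C/(xy)$ via the overshoot estimate~\eqref{eq:calp2}, feeds this into a conditional Kochen--Stone lemma to get $\Pr(A_{qx}\text{ i.o.}\mid\cF_n)\geq\delta>0$ uniformly in $n$, and then upgrades to probability one by L\'evy's zero--one law. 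Your outline has no analogue of the two-point bound, and that is the missing idea.
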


For the case of a nearest-neighbour Markov chain on $\ZP$, Theorem~\ref{thm:infcp}
is contained in~\cite{cfr}. 
For the model in~\cite{wang}, which lives on $\ZP$ and from $x > 0$ jumps either 1 to the right or 2 to the left,
existence of infinitely many disjoint $(3,2)$ cut intervals (say) implies there
are infinitely many points of $\ZP$ that are never visited by the walk (called `skipped points' in~\cite{wang}). 

To appreciate the context of Theorem~\ref{thm:infcp}, recall Lamperti's result that, under the other conditions
of the theorem, condition~\eqref{eq:cond1}
is sufficient for transience,
while sufficient for recurrence is
$\limsup_{x \to \infty} \bigl( 2x \omu_1(x) -\umu_2(x) \bigr) < 0$: see~\cite{lamp1} or Chapter~3 of~\cite{mpw}.
It is helpful to bear in mind the following example. Suppose that $S_0, S_1, S_2, \ldots$
is SSRW on $\Z^d$. Then if $X_n = \| S_n \|$, the process $X$ satisfies our~\eqref{ass:bounded-jumps} and~\eqref{ass:nonc}, and
a calculation (see e.g.~\cite[\S 1.3]{mpw}) shows that
\[ \lim_{x \to \infty} \bigl( 2 x \umu_1 (x) \bigr) = \frac{d-1}{d} = \lim_{x \to \infty} \bigl( 2 x \omu_1 (x) \bigr) ,\]
and
\[ \lim_{x \to \infty}  \umu_2 (x)   = \frac{1}{d} = \lim_{x \to \infty}   \omu_2 (x) . \]
Thus~\eqref{eq:cond2} and~\eqref{ass:variance} also hold, and condition~\eqref{eq:cond1}
is equivalent to $d-2 > 0$, i.e.~$d \geq 3$. We explore the implications of Theorem~\ref{thm:infcp}
for many-dimensional random walks in more detail in Section~\ref{sec:elliptical}.

To find examples of transient processes with $\# \cC  < \infty$,
we require processes that are `less transient' than SSRW in $\Z^3$.
At this point it is most convenient to assume that $X$ is Markov.
 Then a more refined recurrence classification~(see~\cite{mai})
says that a sufficient condition for transience is, for some $\theta >0$ and all $x$ sufficiently large,
\[ 2 x \mu_1 (x) \geq \left( 1 + \frac{1 + \theta}{\log x } \right) \mu_2 (x) ,\]
and a sufficient condition for recurrence is the reverse inequality with $\theta < 0$.
E.g., 
if  
\begin{equation}
\label{eq:second-criticality}
\lim_{x \to \infty} \mu_2 (x) = b \in (0,\infty), \text{ and }
 \mu_1 (x) = \frac{a}{2x} + \frac{c +o(1)}{2x \log x} ,\end{equation}
then $a > b$ implies Theorem~\ref{thm:infcp} holds, and $a < b$ is recurrent (regardless of $c$).
The critical case has $a=b$, and then $c < b$ implies recurrence and $c > b$ implies transience.
This latter regime provides examples of processes with few cutpoints, as we show in Theorem~\ref{thm:fcp}.

We need for this result some stronger regularity assumptions on the process, as follows.
A set $S \subseteq \RP$ is \emph{uniformly locally finite} if
\begin{equation}
\label{eq:ulf}
\sup_{x \in \RP} \# ( S \cap [x,x+1] ) < \infty . \end{equation}

\begin{description}
\item
[\namedlabel{ass:markov}{M}]
Suppose that $X$ is an irreducible, time-homogeneous Markov chain on
an unbounded, uniformly locally finite state space $\cX \subseteq \RP$. List the elements of $\cX$ in increasing order
as $0 = s_0 < s_1 < s_2 < \cdots$. Suppose that there exist $k_0, m_0 \in \N$ and $\delta_0 >0$ such that
for all $k \geq k_0$ there is $m = m(k)$ with $1 \leq m \leq m_0$ for which
\begin{equation}
\label{eq:one-step}
\Pr \Bigl( X_{m} = s_{k+1}, \, \max_{0 \leq \ell \leq m-1} X_\ell \leq s_k \Bigmid X_0 = s_k \Bigr) > \delta_0.
\end{equation}
\end{description}

Note that~\eqref{ass:markov} implies~\eqref{ass:nonc} (see e.g.~Corollary~2.1.10 of~\cite{mpw}).
Condition~\eqref{eq:one-step} holds with $m_0 = 1$ if $\Pr ( X_{n+1} = s_{k+1} \mid X_n = s_k ) > \delta_0 >0$,
as in the nearest-neighbour model of~\cite{cfr}, 
but also holds for example in the setting of~\cite{wang} (with $m_0=2$).

\begin{theorem}
\label{thm:fcp}
Suppose that~\eqref{ass:markov},~\eqref{ass:bounded-jumps}, and~\eqref{ass:variance} hold. 
Suppose also that there exist constants $x_0 \in \RP$ and $D < \infty$ such that
\begin{equation}
\label{eq:cond3}
\mu_1(x) \ge 0 ~ \text{and} ~
2x\mu_1(x) - \mu_2(x) \le  \frac{D}{\log x} , \text{ for all } x \geq x_0.
\end{equation}
Then $\Pr ( \# \cC < \infty) = 1$.
\end{theorem}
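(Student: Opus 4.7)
The plan is to combine a Lyapunov function tailored to the log-critical regime~\eqref{eq:cond3} with a Borel--Cantelli argument along a sparse sequence of levels.

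First I would reduce to the transient case: if $X_n$ does not tend to $\infty$ a.s., then by definition $\cC = \emptyset$ and there is nothing to prove. So assume $X_n \to \infty$ almost surely. The central ingredient is a Lyapunov function $V$ on the state space with $V(x) \to 0$ as $x \to \infty$; a natural first candidate is $V(x) = (\log x)^{-\alpha}$, for a parameter $\alpha > 0$ to be chosen. Using Taylor's theorem controlled by bounded increments~\eqref{ass:bounded-jumps}, together with~\eqref{eq:cond3}, one computes
\[
\Exp[V(X_{n+1}) - V(X_n) \mid X_n = x] = \frac{\alpha(\log x)^{-\alpha - 2}}{2 x^2} \bigl[(\alpha + 1) \mu_2(x) - (2 x \mu_1(x) - \mu_2(x)) \log x\bigr] + O\bigl((\log x)^{-\alpha - 1}/x^3\bigr),
\]
so using~\eqref{ass:variance} and choosing $\alpha$ sufficiently large (depending on $D$ and $\liminf \mu_2$), the bracketed quantity is bounded below by a positive constant for $x$ large, making $V(X_n)$ a nonnegative submartingale while $X_n$ stays above some threshold.

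Optional stopping applied to the first return time $\tau_y = \inf\{n \geq 1 : X_n \leq y\}$ then yields, for $y < x$ both large, a lower bound on the return probability of the form $\Pr_x(\tau_y < \infty) \geq c (\log y / \log x)^\alpha$; equivalently, an upper bound on the ``escape'' probability that the chain, starting from $x$, never returns to $[0, y]$. The Borel--Cantelli step runs along a sparse sequence of levels $y_j \to \infty$: an event forcing a cutpoint in $[y_j, y_{j+1})$ is subsumed by the event that, after the first entrance into $[y_{j+1}, \infty)$, the chain never returns to $[0, y_j]$, whose probability is controlled by the estimate above. Choosing the sparsity of the sequence $\{y_j\}$ carefully (on an iterated-exponential scale, matched to $\alpha$) makes the sum of these probabilities converge; Borel--Cantelli then yields that almost surely only finitely many such intervals carry a cutpoint. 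Combined with uniform local finiteness~\eqref{eq:ulf} (which gives $\cC \cap [0, y_{j_0}]$ finite for any $j_0$), we conclude $\# \cC < \infty$ almost surely.

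The hardest part, I expect, is calibrating the Lyapunov function and the sparse sequence so that the Borel--Cantelli sum is actually summable: the naive form $V(x) = (\log x)^{-\alpha}$ gives bounds of the form $1 - c(\log y_j/\log y_{j+1})^\alpha$, which need the ratio $\log y_j / \log y_{j+1}$ to approach $1$ slowly while the resulting series still converges; this typically forces an iterated-logarithmic refinement of $V$ (as in~\cite{mai}) together with a doubly-exponential sparse sequence, and requires treating the regime $D \geq \liminf \mu_2$ with particular care. A secondary subtlety is ensuring that the Markov structure~\eqref{ass:markov}, and specifically the one-step progress condition~\eqref{eq:one-step}, is used to translate sparse-level escape events back into cutpoint events: without~\eqref{eq:one-step} the chain could ``skip'' the sparse levels in a way that decouples escape from cutpoints at nearby $s_k$, and the local-finiteness~\eqref{eq:ulf} is also needed here.
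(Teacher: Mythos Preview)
Your Lyapunov computation and the optional-stopping bound on escape probabilities are correct, but the Borel--Cantelli step cannot be made to work, for a structural reason that no refinement of $V$ or of the sparse sequence will cure. Write $a_j := V(y_j)$; your bound on $\Pr(\text{cutpoint in }[y_j,y_{j+1}))$ is essentially $1 - a_{j+1}/a_j$. For the $y_j$ to exhaust $\RP$ you need $a_j \to 0$, but then $\sum_j \log(a_{j+1}/a_j) = \log(a_N/a_0) \to -\infty$, while $\log(a_{j+1}/a_j) \le -(1 - a_{j+1}/a_j)$, so $\sum_j (1 - a_{j+1}/a_j) = \infty$ for \emph{any} positive sequence $a_j \to 0$. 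Since the escape probability from level $x$, started just above $x$, is genuinely of order $1/(x\log x)$ under~\eqref{eq:cond3} (with a matching lower bound when~\eqref{eq:cond4} holds, cf.\ Lemma~\ref{lem:proest2}), there is no sharper pointwise estimate available, and the direct first-moment Borel--Cantelli route is blocked.

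The paper circumvents this by extracting an extra factor of $\log x$ from a conditional-abundance argument due to James--Lyons--Peres~\cite{jlp}: if $E_x = \{\,\cC \cap [x,2x] \neq \emptyset\,\}$ occurs, then on that event the Lebesgue measure $M_x$ of separating points in $[x/2,2x]$ is, in expectation, at least $c\log x$. This is where assumption~\eqref{ass:markov} (specifically~\eqref{eq:one-step}) is genuinely used, via Lemma~\ref{lem:fcp}, to show that from each of order $x$ intermediate levels $y$ there is probability $\gtrsim 1/(r-y)$ of reaching the rightmost cutpoint $r$ before returning below $y$, producing a separating unit interval at $y$. Combining $\Exp M_x \le C/\log x$ (from the $1/(x\log x)$ escape bound) with $\Exp(M_x \2{E_x}) \ge c\,\Pr(E_x)\log x$ yields $\Pr(E_x) \le C/(\log x)^2$, which \emph{is} summable along $x = 2^k$. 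Your proposal is missing this leveraging step; the role you assign to~\eqref{eq:one-step} (preventing ``skipping'' of sparse levels) is not the real one.
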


The next result gives a result in the other direction.
In particular, Proposition~\ref{prop:exp-infinite}
gives a mild condition,   not requiring~\eqref{ass:markov},
under which $\Exp  \#\cCs  = \infty$. Note that~\eqref{eq:cond4}
is weaker than~\eqref{eq:cond1} from Theorem~\ref{thm:infcp}.
 Theorem~\ref{thm:fcp} and Proposition~\ref{prop:exp-infinite}
together show that if~\eqref{ass:markov} and~\eqref{eq:second-criticality} hold
with $c > a = b$, then $\#\cC$ and $\# \cCs$ are a.s.~finite (so, in particular, Theorem~\ref{thm:infcp} does not apply), but both have infinite expectation.

\begin{proposition}
\label{prop:exp-infinite}
Suppose that~\eqref{ass:bounded-jumps},~\eqref{ass:nonc}, and~\eqref{ass:variance} hold. Suppose also that
for some $\theta >0$ and all $x$ sufficiently large,
\begin{equation}
\label{eq:cond4}
 2 x \umu_1 (x) \geq \left( 1 + \frac{1 + \theta}{\log x } \right) \omu_2 (x) . \end{equation}
If also $\Exp X_0 < \infty$, 
then there exists a constant $c>0$ such that
$ \Exp (  \#\cCs \cap [0,x] ) \geq c \log \log x$ for all $x$ sufficiently large;
in particular, $\Exp \# \cCs  = \infty$.
\end{proposition}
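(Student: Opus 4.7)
\medskip
\noindent\textit{Proof proposal.}
The plan is to adapt the proof of Theorem~\ref{thm:infcp} to the near-critical regime by choosing a Lyapunov function calibrated to~\eqref{eq:cond4} and performing a counting argument across logarithmic scales. First, the hypotheses (in particular~\eqref{eq:cond4} with $\theta>0$) imply $X_n\to\infty$ almost surely, by the refined Lamperti-type recurrence classification of~\cite{mai} (see also~\cite[Ch.~3]{mpw}). Fix $\alpha\in(0,\theta)$ and set $V(z):=(\log z)^{-\alpha}$. A second-order Taylor expansion of $V(X_n+\Delta_n)$ about $X_n$, valid by~\eqref{ass:bounded-jumps}, combined with~\eqref{eq:cond4} and~\eqref{ass:variance}, yields
\[
\Exp[V(X_{n+1})-V(X_n)\mid \cF_n] \le -c_1/(X_n^2(\log X_n)^{\alpha+2}) \quad \text{on } \{X_n\ge x_1\},
\]
for some $x_1<\infty$ and $c_1>0$; thus $V(X_n\vee x_1)$ is a bounded supermartingale. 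Optional stopping at the hitting time $\tau_y:=\inf\{n:X_n\le y\}$, combined with $V(X_n)\to 0$ a.s.\ on $\{\tau_y=\infty\}$ and bounded convergence, then gives the escape estimate
\[
\Pr(\tau_y=\infty\mid X_0=z) \ge 1-(\log y/\log z)^\alpha, \quad z>y\ge x_1.
\]

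Next, set $y_k:=e^k$ and $T_k:=\inf\{n:X_n\ge y_k\}$; by transience these are a.s.\ finite. Define the successive record times $L_0:=T_k$ and $L_{j+1}:=\inf\{n>L_j:X_n>X_{L_j}\}$, and let $N_k:=\#\{j\ge 0:X_{L_j}<y_{k+1}\}$ be the number of record values of~$X$ in the window $[y_k,y_{k+1})$. Since $X_{L_j}$ is a strict maximum up to time $L_j$, the point $X_{L_j}$ is a strong cutpoint iff $X_n>X_{L_j}$ for all $n>L_j$. Applying the escape estimate after the step at $L_j$ (using~\eqref{ass:variance} together with~\eqref{eq:cond4} to ensure that $\Delta_{L_j}\ge\delta$ with positive conditional probability, for some $\delta>0$), we obtain
\[
\Pr(X_{L_j}\in\cCs\mid\cF_{L_j}) \ge c_2/(X_{L_j}\log X_{L_j}) \ge c_2/(y_{k+1}(k+1)).
\]
Linearity of expectation then yields
\[
\Exp(\#\cCs\cap [y_k,y_{k+1})) \ge \frac{c_2\,\Exp N_k}{y_{k+1}(k+1)}.
\]

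The main technical input, and the hard part of the proof, is a lower bound of the form $\Exp N_k\ge c_3 y_k$. Heuristically, at scale $y_k$ the walk has variance $\Theta(1)$ per step by~\eqref{ass:variance} and drift of order $1/(y_k\log y_k)$, so is essentially diffusive: it spends time of order $y_k^2$ in the window $[y_k,y_{k+1})$ and attains $\Theta(y_k)$ distinct new-maximum values. A rigorous proof of $\Exp N_k\ge c_3 y_k$ requires second-moment estimates on the sojourn time and on the range of~$X$ in the window, in the spirit of the corresponding step in the proof of Theorem~\ref{thm:infcp} (and avoiding the Markov property, via conditioning on $\cF_{T_k}$). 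Granted this bound, the preceding display gives $\Exp(\#\cCs\cap [y_k,y_{k+1}))\ge c_4/k$, and summation over $K\le k<\log x$ yields $\Exp(\#\cCs\cap [0,x])\ge c_5\log\log x$ for $x$ large enough. The hypothesis $\Exp X_0<\infty$ is used only to guarantee that the initial scale $T_K$ has finite expectation, bounding the contribution of small scales.
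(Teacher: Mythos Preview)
Your Lyapunov function $V=(\log z)^{-\alpha}$ with $\alpha\in(0,\theta)$ and the resulting escape estimate are exactly those of the paper's Lemma~\ref{lem:proest2}(b), and your record-counting strategy is sound. However, the step you flag as ``the main technical input, and the hard part of the proof''---the bound $\Exp N_k\ge c_3 y_k$---is in fact immediate from~\eqref{ass:bounded-jumps}: consecutive record values satisfy $0<X_{L_{j+1}}-X_{L_j}\le B$, so on $\{X_0<y_k\}$ the number of records in $[y_k,y_{k+1})$ is at least $(y_{k+1}-y_k)/B-1\ge c_3 y_k$ \emph{deterministically}. No second-moment or range arguments are needed; the hypothesis $\Exp X_0<\infty$ enters only to control $\Pr(X_0\ge y_k)$. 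With this observation your argument is complete.

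The paper's proof is shorter because it reuses machinery already built for Theorem~\ref{thm:infcp}: the events $A_{qy}$ (first passage into $[qy,\infty)$ followed by $2\ell$ positive steps and subsequent escape) each produce a strong cutpoint in a distinct interval, and replacing Lemma~\ref{lem:proest} by Lemma~\ref{lem:proest2}(b) gives $\Pr(A_{qy})\ge c/(qy\log(qy))$; summing over integers $y$ with $qy<x$ yields $c\log\log x$ directly. Your record-value decomposition reaches the same conclusion by a different route, trading the construction of the $A_x$ events for the (trivial) record count, but the paper's approach avoids introducing the dyadic scales $y_k$ and the record structure altogether.
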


Proposition~\ref{prop:exp-infinite}
is reminiscent of  Theorem~2 of~\cite{bgs}, which states that for
any transient Markov chain $X$ on a countable state space,
the expected number of \emph{$f$-cutpoints} is infinite, where an $f$-cutpoint is a cutpoint
for the process $f(X_0), f(X_1), \ldots$, and $f$ is a particular function determined by the
law of the Markov chain (see~\cite{bgs}). However, $f$-cutpoints
are not necessarily cutpoints in our sense, since the function $f$ is not necessarily monotone.

\begin{conjecture}
Under the conditions of Theorem~\ref{thm:fcp},  $\Exp [ ( \#\cC )^\alpha ] < \infty$ for all $\alpha < 1$.
\end{conjecture}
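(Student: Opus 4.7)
The plan is to establish a polynomial tail bound of the form $\Pr(\#\cC \ge m) = O(m^{-\beta})$ for some $\beta \in (\alpha, 1]$. Via the layer-cake identity
\[
\Exp[(\#\cC)^\alpha] \;=\; \alpha \int_0^\infty t^{\alpha-1}\,\Pr(\#\cC > t)\,\ud t,
\]
this yields $\Exp[(\#\cC)^\alpha] < \infty$ whenever $\alpha < \beta$. The threshold $\beta = 1$ is critical: in the overlap of the hypotheses of Theorem~\ref{thm:fcp} and Proposition~\ref{prop:exp-infinite}, $\Exp \#\cC = \infty$, so the tail can be no faster than $O(1/m)$. The natural target is thus $\Pr(\#\cC \ge m) = O(1/m)$, which would deliver the full conjecture.

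My approach exploits the recursive structure of cutpoints. Given that $c_m = y$ is a cutpoint, the strong Markov property implies that the trajectory of $X$ after the visit to $c_m$ is the Doob $h$-transform of the chain started from $y$, with $h(z) = \Pr(X_n > y \text{ for all } n \ge 1 \mid X_0 = z)$. A Taylor expansion based on~\eqref{eq:cond3} should show that the $h$-transformed chain on $(y,\infty)$ again satisfies hypotheses of the same type: since $h \to 1$ with $h'(z)/h(z) = O(z^{-1}(\log z)^{-(1+\delta)})$ for some $\delta>0$ by Lamperti harmonic-function estimates (cf.~\cite{mpw}), the resulting perturbation to $\mu_1$ and $\mu_2$ is of order $(\log z)^{-(1+\delta)}$, lower order than the $(\log z)^{-1}$ slack in~\eqref{eq:cond3}. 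Writing $q(y)$ for the probability that this $h$-transformed walk has a cutpoint strictly above $y$, we obtain
\[
\Pr(\#\cC \ge m+1 \mid \#\cC \ge m,\ c_m = y) \;=\; q(y),
\]
and Lamperti-type estimates on the $h$-chain's own escape function should give $1 - q(y) \asymp (\log y)^{-1}$. Iterating then gives
\[
\Pr(\#\cC \ge m) \;\lesssim\; \Exp\!\left[\exp\!\left(-c\sum_{k=1}^{m-1}(\log c_k)^{-1}\right)\right].
\]

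Combining this with a quantitative lower bound on the growth of the cutpoint sequence---heuristically, $\log c_k \gtrsim c\, k$, so that $(\log c_k)^{-1} \gtrsim 1/k$ and the exponent accumulates like $\log m$---would yield the desired $\Pr(\#\cC \ge m) \lesssim 1/m$. Such a growth estimate for $c_k$ can be attempted by applying a secondary Lyapunov function (e.g., $V(x) = (\log x)^\gamma$ with suitable $\gamma$) at the cutpoint times and optional stopping. The principal obstacle is quantitative: although the perturbation of the drift by the $h$-transform is formally of lower order than the near-critical $D/\log x$ bound, closing the iteration uniformly over $m$ requires sharp control of the Lamperti harmonic function at criticality, together with a growth estimate for $c_k$ that does not degrade as $k$ grows. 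In particular, the analysis lies exactly on the boundary between transient and recurrent regimes, and the rate $(\log y)^{-1}$ claimed for $1-q(y)$ must be pinned down precisely (not just as $(\log y)^{-\delta}$ for some $\delta>0$), since any slack in this exponent propagates through the iteration to alter the tail exponent. A possible alternative is the direct last-exit decomposition
$\Pr(\#\cC \ge m) \le \Exp \binom{\#\cC}{m} = \sum_{c_1 < \cdots < c_m} \Pr(\{c_1,\ldots,c_m\} \subseteq \cC),$
expanding each summand as a product of Green's functions and escape probabilities for the successive $h$-chains; but this reformulation shifts, rather than resolves, the same core difficulty of sharp critical-regime estimates.
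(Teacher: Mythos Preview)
This statement is labelled a \emph{Conjecture} in the paper, and the paper offers no proof of it. There is therefore no ``paper's own proof'' against which to compare your proposal; any argument you produce would be new.

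As to the proposal itself: what you have written is a strategic outline, not a proof, and you are candid about this. The essential gaps you identify are real and, as far as I can see, not easy to close. Let me flag three concrete points.

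First, the asymptotic $h'(z)/h(z) = O\bigl(z^{-1}(\log z)^{-(1+\delta)}\bigr)$ for the escape-probability harmonic function $h$ is asserted without justification. Under~\eqref{eq:cond3} the chain sits exactly on the boundary of the Lamperti recurrence/transience classification, and the decay of $1-h$ is governed by the Lyapunov function $g_\nu$ of Lemma~\ref{lem:lyapunov-function}, giving $1-h(z)$ of order $(\log z)^{-\nu}$ for suitable $\nu$; translating this into a pointwise derivative bound valid uniformly over all starting heights~$y$ is not automatic, and the exponent $1+\delta$ you need may not be available.

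Second, the claim $1-q(y)\asymp(\log y)^{-1}$ is the crux. You need the \emph{lower} bound $1-q(y)\gtrsim(\log y)^{-1}$, i.e.\ a uniform positive chance (on the $\log$ scale) that the $h$-transformed chain has no further cutpoint. But the paper's Theorem~\ref{thm:fcp} only shows $\#\cC<\infty$ a.s., with no quantitative rate; extracting a rate for the $h$-chain would essentially require re-proving a quantitative version of Theorem~\ref{thm:fcp}, uniformly over the family of $h$-transforms indexed by $y$.

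Third, the growth bound $\log c_k\gtrsim ck$ is plausible heuristically (each cutpoint forces an escape event of probability $\asymp 1/(x\log x)$, suggesting geometric spacing), but establishing it with the uniformity needed to feed back into the product bound is itself nontrivial, and any loss here weakens the tail exponent below~$1$.

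In short: the architecture is sensible and the layer-cake reduction to $\Pr(\#\cC\ge m)=O(1/m)$ is the right target, but each of the three ingredients above is currently an assertion rather than an estimate, and all three must hold \emph{sharply} and \emph{uniformly} for the iteration to close. As written this is a promising plan of attack on an open problem, not a proof.
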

	
	We believe that the condition~\eqref{eq:cond3} in	
	Theorem~\ref{thm:fcp} can be relaxed. Indeed,
	in the case of a nearest-neighbour random walk on~$\ZP$,
	it is shown in~Theorem~5.1 of~\cite{cfr} that
	if, for $\gamma >0$,
		\[ 2x\mu_1(x) =  1 + \frac{1}{(\log \log x)^\gamma}, \text{ and } \mu_2 (x) =1, \text{ for all } x \geq 1,\]
then	$\# \cC < \infty$ a.s.~whenever $\gamma >1$, while
$\# \cCs = \infty$ a.s.~if $\gamma \leq 1$. 

\begin{problem}
Obtain a 
sharp phase transition analogous to that in~\cite{cfr} in the generality considered in the present paper.
\end{problem}

\section{Application to higher dimensions}
\label{sec:elliptical}

Suppose that $d \in \N$,
and let $\Sigma$ be an unbounded, measurable subset of $\R^d$ with $0 \in \Sigma$.
Let $\Xi := (\xi_0, \xi_1, \xi_2, \ldots)$ be a time-homogeneous Markov
process with $\Pr ( \xi_n \in \Sigma ) = 1$ for all $n$,
with a family of laws $\Pr_x ( \, \cdot \,) = \Pr ( \, \cdot \, \mid \xi_0 = x)$ for initial
state $x \in \Sigma$.
In other words, for measurable $A \subseteq \Sigma$ and $x \in \Sigma$,
 $\Pr ( \xi_{n+1} \in A \mid \xi_n = x ) = \Pr_x ( \xi_1 \in A ) = P (x, A)$
for a transition kernel $P$.  
Define $\theta_n := \xi_{n+1} - \xi_n$ for $n \in \ZP$,
and write simply $\theta$ for $\theta_0$. Throughout
this section we
view vectors in $\R^d$ as column vectors. Write $\Sp{d-1} := \{ u \in \R^d : \| u \| =1 \}$,
and for $x \in \R^d \setminus \{ 0 \}$, set $\hat x := x / \| x \|$.

Assume that the increments of $\Xi$ are bounded, i.e., for some constant $B < \infty$,
\begin{equation}
\label{eq:xi-bounded}
\Pr_x \left( \| \theta \| \le B   \right) = 1, \text{ for all } x \in \Sigma.
\end{equation}
Under assumption~\eqref{eq:xi-bounded}
the mean drift function $\mu(x) := \Exp_x \theta$ (a vector in $\R^d$) and
increment covariance function $M(x) := \Exp_x ( \theta \theta^\tra )$ (a $d \times d$ symmetric matrix)
are well-defined. Here $\Exp_x$ is expectation with respect to $\Pr_x$.
We assume that the walk has zero drift, i.e.,
\begin{equation}
\label{eq:xi-zero-drift}
\mu (x ) = 0, \text{ for all } x \in \Sigma,
\end{equation}
and is uniformly non-degenerate in the sense that there exists $\eps_0 >0$ such that
\begin{equation}
\label{eq:xi-non-degenerate}
\trace M(x) \geq \eps_0 , \text{ for all } x \in \Sigma.
\end{equation}
A natural class of models consists of the \emph{elliptic random walks} introduced in~\cite{gmmw} (see also~\cite[\S 4.2]{mpw}),
which are described by an asymptotic covariance structure, as follows.
Suppose that there exist constants $U$ and $V$ with $0 < U \leq V < \infty$ for which
\begin{align}
\label{eq:xi-U}
\lim_{r \to \infty} \sup_{x \in \Sigma : \| x \| \geq r} \bigl| \hat x^\tra M (x) \hat x - U \bigr| & = 0, \\
\label{eq:xi-V}
\lim_{r \to \infty} \sup_{x \in \Sigma : \| x \| \geq r} \bigl| \trace M (x)  - V \bigr| & = 0.
\end{align}
Note that if $d=1$, then $M(x)=\Exp_x (\theta^2)$ is a scalar, necessarily $U=V$,
and the process is recurrent (see e.g.~Theorem~2.5.7 of~\cite{mpw}). 
Thus we must take $d \geq 2$ to see transience, and it turns out 
that we must take 
 $2U < V$ (cf.~\cite{gmmw} and~\cite[\S 4.2]{mpw}). The case $2U > V$ is recurrent.
The boundary case $2U=V$ may be recurrent or transient, and, if transient,
may fall into the regime corresponding to Theorem~\ref{thm:fcp}, so we must exclude that case.

\begin{description}
\item
[\namedlabel{ass:xi}{E}]
Suppose that~\eqref{eq:xi-bounded}--\eqref{eq:xi-V} hold with $d \geq 2$ and $2U < V$.
\end{description}

We will obtain results for $\Xi$ by looking at the process $X$
defined by $X_n = \| \xi_n \|$ for $n \in \ZP$, and applying our
one-dimensional results from Section~\ref{sec:introduction}.

For the process $\| \Xi \|$, an $(h,k)$ cut interval
corresponds to an $(h,k)$ \emph{cut annulus} for $\Xi$,
that is, an annulus of width at least~$h$
for which there exist $m$ and $\ell \geq k -1$
with $X_0, \ldots, X_{m-1}$ in the bounded complement of the annulus,
$X_{m+\ell+1}, X_{m+\ell+2}, \ldots$ in the unbounded complement of the annulus,
and $X_m, X_{m+1}, \ldots, X_{m+\ell}$ inside the annulus with
$\| X_m \| < \| X_{m+1} \| < \cdots < \| X_{m+\ell} \|$.

\begin{theorem}
\label{thm:elliptical}
Suppose that $\Xi$ is a time-homogeneous Markov process
on $\Sigma \subseteq \R^d$ 
 for which~\eqref{ass:xi} holds. 
Then a.s., 
for any $h \in (0,\infty)$ and $k \in \N$, there are infinitely many $(h,k)$ cut annuli.
\end{theorem}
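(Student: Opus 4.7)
The plan is to reduce to the one-dimensional Theorem~\ref{thm:infcp} applied to the scalar process $X_n := \|\xi_n\|$ (adapted to the natural filtration of $\Xi$), and then to translate $(h,k)$ cut intervals for $X$ into cut annuli for $\Xi$.

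The first task is to verify the hypotheses of Theorem~\ref{thm:infcp}. Condition~\eqref{ass:bounded-jumps} for $X$ is immediate from~\eqref{eq:xi-bounded} and the triangle inequality, and~\eqref{ass:nonc} holds by the known transience of elliptic random walks under~\eqref{ass:xi} with $2U<V$ (see~\cite[\S 4.2]{mpw}). The other three conditions~\eqref{ass:variance}, \eqref{eq:cond1}, \eqref{eq:cond2} come from the standard Lamperti expansion
\[
\|x+\theta\|-\|x\| = \hat x^\tra\theta + \frac{\|\theta\|^2 - (\hat x^\tra\theta)^2}{2\|x\|} + O(\|x\|^{-2}),
\]
valid uniformly in $\|\theta\|\leq B$. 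Taking $\Pr_x$-expectations, using~\eqref{eq:xi-zero-drift}, and applying the uniform asymptotics~\eqref{eq:xi-U}--\eqref{eq:xi-V} yields
\[
2\|x\|\Exp_x[\|\xi_1\|-\|x\|] \to V-U, \qquad \Exp_x[(\|\xi_1\|-\|x\|)^2] \to U,
\]
uniformly as $\|x\|\to\infty$. With $\umu_k, \omu_k$ taken as the infimum and supremum of these conditional expectations over $\{x\in\Sigma : \|x\|=r\}$, condition~\eqref{ass:variance} holds since $U>0$, condition~\eqref{eq:cond2} holds since $r\omu_1(r)\to(V-U)/2<\infty$, and the critical condition~\eqref{eq:cond1} becomes $V-2U>0$, which is precisely the assumption in~\eqref{ass:xi}.

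Theorem~\ref{thm:infcp} then yields, a.s., infinitely many disjoint $(h,k)$ cut intervals for $X$. For each such interval $I=[a,b]$, the strong cutpoints of $X$ in the open interior $(a,b)$ occur at a contiguous block of times $n_0, n_0+1,\ldots, n_0+\ell$ with strictly increasing values: any $X_j\in(a,b)$ would be a strong cutpoint of $X$ inside $I$ by the defining property of cut intervals, so extremality of $n_0$ and $n_0+\ell$ forces all intermediate times $j\in(n_0, n_0+\ell)$ to satisfy $X_j\in(a,b)$ and forces $X_j\leq a$ for $j<n_0$ and $X_j\geq b$ for $j>n_0+\ell$. This structure is essentially the definition of an $(h,k)$ cut annulus $\cA_I := \{y\in\R^d : a\leq \|y\|\leq b\}$ of width $\geq h$.

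The one genuinely technical point is strictness in the boundary condition: the definition of cut annulus requires $X_j<a$ for $j<n_0$ and $X_j>b$ for $j>n_0+\ell$, whereas the argument only produces the weak inequalities $X_j\leq a$ and $X_j\geq b$. Since $a$ and $b$ are random endpoints, one cannot simply invoke non-atomicity of $\Xi$ on a fixed sphere. I would handle this by applying Theorem~\ref{thm:infcp} with slightly enlarged parameters (say width $h+1$) and then choosing, pathwise for each resulting cut interval, some $\eta\in(0,1/2)$ with $a+\eta,\, b-\eta\notin\{X_j : j\geq 0\}$; such an $\eta$ exists because $\{X_j\}$ is a countable set. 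The shrunken annulus $\{y: a+\eta\leq \|y\|\leq b-\eta\}$ then has width $\geq h$, retains all interior strong cutpoints of $I$ for $\eta$ small enough (since their values lie in $[X_{n_0}, X_{n_0+\ell}]\subset(a,b)$), and satisfies the strict complement conditions. Disjointness and infinitude of the cut annuli are then inherited directly from those of the cut intervals.
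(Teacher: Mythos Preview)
Your proposal is correct and follows the same approach as the paper: reduce to the one-dimensional Theorem~\ref{thm:infcp} by verifying its hypotheses for $X_n=\|\xi_n\|$ via the Lamperti expansion (the paper cites Lemmas~4.1.1 and~4.1.5 of~\cite{mpw} for the non-confinement condition and the increment-moment asymptotics, respectively, rather than deriving them by hand). Your discussion of the cut-interval-to-cut-annulus translation is more careful than the paper's, which simply defines cut annuli to correspond to cut intervals of $\|\Xi\|$ and leaves the translation implicit; your perturbation argument is a valid way to enforce strict separation at the boundary, though in fact the explicit construction in Lemma~\ref{lem:Ax} already produces intervals $I_x=[x,x+\ell\eps]$ for which the trajectory lies strictly below $x$ before entry and strictly above $x+\ell\eps$ after exit.
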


The next result on homogeneous random walk, which is essentially due to James and Peres~\cite[\S 4]{jp},
now follows as a special case of Theorem~\ref{thm:elliptical}.

\begin{corollary}
\label{cor:iid}
Suppose that $\zeta, \zeta_1, \zeta_2, \ldots \in \R^d$ are i.i.d.~with
$\Pr ( \| \zeta \| \leq B ) =1$, $\Exp \zeta = 0$, and $\Exp ( \zeta \zeta^\tra ) = \sigma^2 I$,
where $B < \infty$ and $\sigma^2 \in (0,\infty)$ are constants, and $I$ is the $d$ by $d$ identity matrix.
Then for $d \geq 3$, the random walk $\Xi$ generated by $\xi_n = \sum_{i=1}^n \zeta_i$
is transient and, a.s.,  for any $h \in (0,\infty)$ and $k \in \N$, has infinitely many $(h,k)$ cut annuli.
\end{corollary}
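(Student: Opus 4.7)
The plan is to show that the i.i.d.\ walk $\Xi$ satisfies the hypothesis~\eqref{ass:xi} of Theorem~\ref{thm:elliptical} with $\Sigma = \R^d$; the cut-annulus conclusion then follows immediately, and transience drops out as a byproduct. So the entire task reduces to checking~\eqref{eq:xi-bounded}--\eqref{eq:xi-V} and the inequality $2U<V$.

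First, \eqref{eq:xi-bounded} is the given bound $\|\zeta\| \leq B$. Because $\theta_n = \zeta_{n+1}$ is independent of $\xi_n$, the mean drift and covariance functions are constant in $x$: for every $x \in \R^d$,
\[
\mu(x) = \Exp \zeta = 0, \qquad M(x) = \Exp ( \zeta \zeta^\tra ) = \sigma^2 I.
\]
This gives~\eqref{eq:xi-zero-drift} immediately, and $\trace M(x) = d\sigma^2 > 0$ yields~\eqref{eq:xi-non-degenerate} with any $\eps_0 \in (0, d\sigma^2]$. For the elliptic structure, $\hat x^\tra M(x) \hat x = \sigma^2$ and $\trace M(x) = d\sigma^2$ are constants, so the suprema in~\eqref{eq:xi-U} and~\eqref{eq:xi-V} vanish for every $r$, giving $U = \sigma^2$ and $V = d\sigma^2$. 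The condition $2U < V$ then reduces to $2\sigma^2 < d\sigma^2$, i.e.\ $d \geq 3$, which is the hypothesis of the corollary.

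With~\eqref{ass:xi} verified, Theorem~\ref{thm:elliptical} directly yields a.s.\ infinitely many $(h,k)$ cut annuli for every $h > 0$ and $k \in \N$. Transience of $\Xi$ then follows from the existence of such annuli at unboundedly large radii, which forces $\|\xi_n\| \to \infty$ a.s. (Alternatively, one could cite Chung--Fuchs.)

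No genuine obstacle arises; the argument is essentially a verification. The one conceptual point worth recording is that isotropy $M \equiv \sigma^2 I$ forces the elliptic-walk parameters to be $(U,V) = (\sigma^2, d\sigma^2)$, so the elliptic transience threshold $2U<V$ coincides exactly with the classical i.i.d.\ threshold $d \geq 3$, which is why the abstract Theorem~\ref{thm:elliptical} recovers the James--Peres result in full.
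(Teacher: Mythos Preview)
Your proof is correct and follows the same route as the paper's: verify that $M(x)=\sigma^2 I$ gives $U=\sigma^2$, $V=d\sigma^2$, so $2U<V$ iff $d\geq 3$, and then invoke Theorem~\ref{thm:elliptical}. Your version is simply more explicit in checking each of~\eqref{eq:xi-bounded}--\eqref{eq:xi-V} and in addressing the transience claim (which the paper leaves implicit).
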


Suppose that $\zeta, \zeta_1, \zeta_2, \ldots \in \R^d$ are i.i.d.~with
$\Pr ( \| \zeta \| \leq B ) =1$, $\Exp \zeta = 0$, and $\Exp ( \zeta \zeta^\tra ) = M$
for some positive-definite~$M$.
Then there exists a positive-definite matrix $M^{-1/2}$, which defines a linear transformation of $\R^d$,
such that $M^{-1/2} \Xi$ has increment distribution $\tilde \zeta := M^{-1/2} \zeta$
for which $\Exp \tilde \zeta = 0$ and $\Exp ( \tilde\zeta \tilde\zeta^\tra ) =
M^{-1/2} \Exp  ( \zeta \zeta^\tra ) M^{-1/2} = I$. Thus Corollary~\ref{cor:iid}
applies, showing that, if $d \geq 3$,
 $M^{-1/2} \Xi$  has infinitely many $(h,k)$ cut annuli.
For $\Xi$, this translates to linear transformations (by $M^{1/2}$)
of cut annuli, which are elliptical annuli rather than spherical annuli.
This raises a natural question.

\begin{problem}
For general positive-definite $M$, is it the case that there are infinitely many $(h,k)$ \emph{spherical} cut annuli
for the random walk in $\R^d$, $d \geq 3$, whose increments have mean zero and covariance $M$?
\end{problem}

In looking for transient multidimensional processes with \emph{finitely-many} cut annuli, it is natural to
take processes with a radial drift chosen so that $\| \Xi \|$ has a drift in the window identified by Theorem~\ref{thm:fcp}.
However, Theorem~\ref{thm:fcp} requires the Markov property, and so cannot be applied to $\|\Xi\|$, unless we impose some additional
isotropy condition.

In the rest of this section we give the proofs of Theorem~\ref{thm:elliptical}
and Corollary~\ref{cor:iid}.

\begin{proof}[Proof of Theorem~\ref{thm:elliptical}.]
Let $X_n = \| \xi_n \|$.
Lemma~4.1.1 of~\cite{mpw} shows that~\eqref{eq:xi-bounded}, \eqref{eq:xi-zero-drift}, and~\eqref{eq:xi-non-degenerate}
 imply that $\limsup_{n \to \infty} \| \xi_n \| = \infty$, a.s.
Moreover, Lemma~4.1.5 of~\cite{mpw} shows that under conditions~\eqref{eq:xi-bounded} and~\eqref{eq:xi-zero-drift},
we have that, for some $\delta >0$,
\begin{align*}
\Exp ( \Delta_n \mid \xi_n = x ) & = \frac{\trace M(x) - \hat x^\tra M(x) \hat x}{2 \| x \|} + O ( \| x \|^{-1-\delta} ), \\
\Exp ( \Delta_n^2 \mid \xi_n = x ) & = \hat x^\tra M(x) \hat x + O ( \| x\|^{-\delta} ) ,
\end{align*}
as $\| x\| \to \infty$.
 With~\eqref{eq:xi-U} and~\eqref{eq:xi-V}, we get
\begin{align*}
\Exp ( \Delta_n \mid \xi_n = x )   = \frac{V - U}{2 \| x \|} + o ( \| x \|^{-1} ), \text{ and } \Exp ( \Delta_n^2 \mid \xi_n = x )  = U + o (1) ,
\end{align*}
and so~\eqref{eq:mu-properties} is satisfied with
\[ \umu_1 (x) = \frac{V-U}{2x} + o ( x^{-1} ) ,  ~~~  \omu_1 (x) = \frac{V-U}{2x} + o ( x^{-1} ) ,\]
and $\umu_2 (x) = U + o(1) = \omu_2 (x)$. Since $U >0$, we have that~\eqref{ass:variance} holds,
while
\[ \liminf_{x \to \infty} \bigl( 2 x \umu_1 (x) - \omu_2 (x)  \bigr) =  V - 2 U .\]
Thus if $V > 2U$ we have that~\eqref{eq:cond1} holds.
Then   Theorem~\ref{thm:infcp} gives the result.
\end{proof}

\begin{proof}[Proof of Corollary~\ref{cor:iid}.]
The conditions of Theorem~\ref{thm:elliptical}
are satisfied with $M(x) = \sigma^2 I$, $U = \sigma^2$, and $V = \sigma^2 d$,
so $V > 2U$ if and only if $d > 2$.
\end{proof}

\section{Cut times and separating points}
\label{sec:relations}

A few variations on, and relatives of, the concept of cutpoint have appeared in the literature (see e.g.~\cite{dek,cfr,jlp,jp,lawler96,lawler02}).
Here we briefly comment on a definition of \emph{cut time}, and also introduce the notion of a \emph{separating point}, which will be useful for our proofs.

\begin{definition} \phantomsection
\label{def:cuttimes}
\begin{myenumi}
\setlength{\itemsep}{0pt plus 1pt}
\item[{\rm (i)}]
The point $x \in \RP$ is a \emph{separating point} for $X$ if there exists $n_0 \in \ZP$ such that $X_n \leq x$ for all $n \leq n_0$ and $X_n > x$ for all $n > n_0$.
\item[{\rm (ii)}]
We say that $n \in \ZP$ is a \emph{cut time} for $X$ if $X_n = \max_{0\leq \ell \leq n} X_\ell < X_m$ for all $m > n$.
\end{myenumi}
\end{definition}

Note that, in contrast to a cutpoint, a separating point need not be visited by $X$.
It follows from (ii) that if $n$ is a cut time, then 
$\{ X_0, \ldots, X_n \} \cap \{ X_{n+1}, X_{n+2} ,\ldots \} = \emptyset$, an attribute that
has received some attention in discrete spaces~\cite{lawler96,bgs}, but which does not
ensure the spatial separation properties that interest us here.

Let $\cT \subseteq \ZP$ be the set of  cut times for $X$, and let $\cS \subseteq \RP$ be the set of separating points. 
Note that $\cC \subseteq \cS$, and if $I$ is any open $(h,k)$ cut interval, then $I \subseteq \cS$.
In particular, if for some $h>0$ there are infinitely many disjoint $(h,k)$ cut intervals, then $| \cS | = \infty$.
The next result gives some simple relations involving $\cC$, $\cT$, and $\cS$.

\begin{lemma}
We have (i) $\# \cC = \# \cT$,  and (ii) if $\limsup_{n \to \infty} X_n = \infty$, then $\# \cC  =\infty$ implies $\sup \cS = \infty$.
\end{lemma}
\begin{proof}
If $n$ is a cut time, then $X_n$ is a cutpoint. 
If $n_1 < n_2$ are different cut times, then $X_{n_2} = \max_{0 \leq \ell \leq n_2} X_\ell \geq X_{n_1+1} > X_{n_1}$.
Hence $\# \cC \geq \# \cT$. On the other hand, if $x$ is a cutpoint, then there is $n_0$ for which $\max_{0 \leq n \leq n_0} X_n = X_{n_0} = x$ and $X_n > X_{n_0}$ for all $n > n_0$,
so $n_0$ is a cut time; hence $\# \cT \geq \# \cC$. Thus~(i) holds.

For part~(ii), suppose that $\limsup_{n \to \infty} X_n = \infty$,
and there are $x_1 < x_2 < \cdots \in \cC$
with $X_{n_k} = x_k$ for times $n_1 < n_2 < \cdots$.
Since 
$X_{n_{k+1}} > \max_{0 \leq m \leq n_k} X_m$,
we have $\lim_{k \to \infty} X_{n_k} \geq \limsup_{n \to \infty} X_n = \infty$, 
so $\cC  \subseteq \cS$ is unbounded.
\end{proof}

\section{Hitting probability estimates}
\label{sec:probabilities}

By~\eqref{ass:bounded-jumps} and~\eqref{ass:nonc},
for any $x > X_0$ 
the process $X$ on $\RP$ will visit $[x,x+B]$; consider the first time it does so,
at some $y \in [x,x+B]$, say. We will show (in Lemma~\ref{lem:ellipticity} below)
that, provided $x$ is large enough, there is uniformly positive probability that
on its next few steps the process makes a sequence of uniformly positive increments, to reach $[y+ 2 h,\infty)$, say.
If then $X$ never returns to $[0,y+h]$, the process will only visit $[y,y+h]$ at (strong) cutpoints.
By adjusting constants, we can thus produce an $(h,k)$ cut interval. The key estimate thus required is  the probability that,
started close to, but greater than, $y$, the process never returns to $[0,y]$. We use a Lyapunov function approach to estimate this probability.
Related estimates are required for proving that cutpoints \emph{do not} occur.
Similar hitting probability estimates play a key role in the work of~\cite{jp,jlp,cfr,wang}, which focused on the Markovian case.
We emphasise that the Markov property is not necessary for much of the argument.

We start with the following
elementary lemma, which gives a one-sided `ellipticity' result. Note that
since our increment moment conditions are asymptotic, we get~\eqref{eq:ellipticity} only for $y_0$ sufficiently
large; if we had stronger conditions so that we could take $y_0 = 0$ in~\eqref{eq:ellipticity},
then assumption~\eqref{ass:nonc} would follow automatically (see e.g.~Proposition~3.3.4 of~\cite{mpw}).

\begin{lemma}
\label{lem:ellipticity}
Suppose that~\eqref{ass:bounded-jumps} and~\eqref{ass:variance} hold. Suppose also that
$\liminf_{x \to \infty} \umu_1 (x) \geq 0$. Then there exist $y_0 \in \RP$ and $\eps >0$ such that, for all $n \in \ZP$,
\begin{equation}
\label{eq:ellipticity}
 \Pr ( X_{n+1} - X_n \geq \eps \mid \cF_n ) \geq \eps, \text{ on } \{ X_n \geq y_0 \} .\end{equation}
\end{lemma}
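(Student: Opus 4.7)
The plan is to produce $\eps > 0$ by combining a lower bound on the second conditional moment with a near-zero lower bound on the first conditional moment, all on the set $\{X_n \geq y_0\}$ for $y_0$ sufficiently large. Everything is purely pointwise on each conditional distribution of $\Delta_n$ given $\cF_n$, exploiting only that $|\Delta_n|\leq B$ a.s.

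First I would fix constants: by \eqref{ass:variance} there is $v>0$ with $\umu_2(x) \geq 2v$ for all $x$ large enough, and by the hypothesis $\liminf \umu_1 \geq 0$ we can (for any $\delta>0$ chosen later) also ensure $\umu_1(x) \geq -\delta$ for all $x$ large. So on $\{X_n \geq y_0\}$, with $y_0$ chosen large enough,
\[
\Exp(\Delta_n \mid \cF_n) \geq -\delta, \qquad \Exp(\Delta_n^2 \mid \cF_n) \geq 2v.
\]

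Next I would decompose $\Delta_n = \Delta_n^+ - \Delta_n^-$ with $\Delta_n^\pm \geq 0$ and $\Delta_n^\pm \leq B$ a.s., so that $(\Delta_n^\pm)^2 \leq B \Delta_n^\pm$. Taking conditional expectations (all on the event $\{X_n\geq y_0\}$, which I will suppress in notation),
\[
\Exp(\Delta_n^+ \mid \cF_n) + \Exp(\Delta_n^- \mid \cF_n) \geq \tfrac{1}{B}\Exp(\Delta_n^2 \mid \cF_n) \geq \tfrac{2v}{B},
\]
while the mean bound gives $\Exp(\Delta_n^+ \mid \cF_n) - \Exp(\Delta_n^- \mid \cF_n) \geq -\delta$. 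Adding,
\[
2\,\Exp(\Delta_n^+ \mid \cF_n) \geq \tfrac{2v}{B} - \delta.
\]
Choosing once and for all $\delta = v/B$ and enlarging $y_0$ accordingly, this yields $\Exp(\Delta_n^+ \mid \cF_n) \geq \alpha$ on $\{X_n \geq y_0\}$, with $\alpha := v/(2B) > 0$.

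Finally I would convert this positive-part mean lower bound into the desired ellipticity by a one-sided Markov-style split: for any $\eta \in (0,\alpha)$,
\[
\alpha \leq \Exp(\Delta_n^+ \mid \cF_n) \leq \eta + B\,\Pr(\Delta_n^+ > \eta \mid \cF_n),
\]
so $\Pr(\Delta_n \geq \eta \mid \cF_n) \geq (\alpha - \eta)/B$ on $\{X_n \geq y_0\}$ (modulo the harmless distinction between $>$ and $\geq$, which is absorbed by taking $\eta$ slightly smaller). Choosing $\eps := \min\{\alpha/2,\,\alpha/(2B)\}$ gives both $\eps \leq \alpha/2$ and $(\alpha-\eps)/B \geq \alpha/(2B) \geq \eps$, which is \eqref{eq:ellipticity}. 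There is no real obstacle here; the only care needed is the order of quantifiers, namely first fixing $v$ from \eqref{ass:variance}, then choosing $\delta$ in terms of $v$ and $B$, which in turn fixes $y_0$ via the $\liminf$ hypothesis on $\umu_1$, and only afterwards defining $\eps$.
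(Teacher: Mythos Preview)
Your proof is correct and follows essentially the same approach as the paper: decompose $\Delta_n$ into positive and negative parts, combine the second-moment lower bound (via $\Delta_n^2 \leq B|\Delta_n|$) with the near-nonnegative drift to force $\Exp(\Delta_n^+\mid\cF_n)$ to be bounded below, and then convert this to a tail probability via the elementary bound $\Delta_n^+ \leq \eta + B\,\1{\Delta_n^+ > \eta}$. The constants differ slightly, and you are a bit more explicit about the order in which $v$, $\delta$, $y_0$, and $\eps$ are chosen, but the argument is the same.
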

\begin{proof}
Let $B<\infty$ be the constant appearing in~\eqref{ass:bounded-jumps}.
Write $\Delta_n = X_{n+1} -X_n$, and for $z \in \R$ write $z^+ = z \1 { z >0 }$
and $z^- = - z \1 {z < 0}$, so $z = z^+ - z^-$ and $|z| = z^+ + z^-$.
By~\eqref{ass:variance} and~\eqref{eq:mu-properties}, there exist constants $\delta>0$ and $y_1 \in \RP$ such that, for all $n \in \ZP$,
\begin{equation}
\label{eq:variance-lower-bound}
 \Exp ( \Delta_n^2 \mid \cF_n ) \geq \delta, \text{ on } \{ X_n \geq y_1 \} .\end{equation}
By assumption~\eqref{ass:bounded-jumps}, we have
 $\Delta_n^2 \leq B | \Delta_n |$, a.s., so, by~\eqref{eq:variance-lower-bound},
\begin{equation}
\label{eq:delta-sum}
\Exp ( \Delta_n^+ \mid \cF_n) + \Exp ( \Delta_n^- \mid \cF_n ) = 
\Exp ( | \Delta_n | \mid \cF_n ) \geq \frac{\delta}{B}, \text{ on } \{ X_n \geq y_1 \} .\end{equation}
Moreover,   $\liminf_{x \to \infty} \umu_1 (x) \geq 0$ implies that  there exists $y_2 \geq y_1$ such that
\begin{equation}
\label{eq:delta-diff}
 \Exp ( \Delta_n^+ \mid \cF_n ) -  \Exp ( \Delta_n^- \mid \cF_n ) =  \Exp ( \Delta_n \mid \cF_n ) \geq - \frac{\delta}{2B}, \text{ on } \{ X_n \geq y_2 \} .\end{equation}
Then we combine~\eqref{eq:delta-sum} and~\eqref{eq:delta-diff} to get, for all $n \in \ZP$,
\begin{equation}
\label{eq:delta-plus} \Exp ( \Delta_n^+ \mid \cF_n ) \geq \frac{\delta}{4B}, \text{ on } \{ X_n \geq y_2 \} .\end{equation}
Now~\eqref{ass:bounded-jumps} shows that for $\eps_0 >0$, $\Delta_n^+ \leq \eps_0 + B \1 { \Delta_n^+ \geq \eps_0 }$.
Thus from~\eqref{eq:delta-plus} we get
\[ \Pr ( \Delta_n^+ \geq \eps_0 \mid \cF_n ) \geq \frac{1}{B} \left(  \frac{\delta}{4B} - \eps_0 \right), \text{ on } \{ X_n \geq y_2 \} .\]
Choose $\eps_0 = \delta/(8B)$. Then we get
\[ \Pr \Bigl( \Delta_n^+ \geq \frac{\delta}{8B} \Bigmid \cF_n \Bigr) \geq  \frac{\delta}{8B^2}, \text{ on } \{ X_n \geq y_2 \}. \]
 This verifies~\eqref{eq:ellipticity}.
\end{proof}

For the rest of the paper we write $\log^p x := (\log x)^p$. Consider Lyapunov functions $f_{\gamma} : \cX \to (0,\infty)$ and $g_{\nu}: \cX \to (0,\infty)$ defined for $\gamma > 0$ and $\nu > 0$ by
\begin{equation*}
f_{\gamma}(x) :=
\begin{cases}
x^{-\gamma} & \text{if } x \ge 1,\\
1 & \text{if } x < 1.
\end{cases}
\end{equation*}
and 
\begin{equation*}
g_{\nu}(x) :=
\begin{cases}
\log^{-\nu} x  & \text{if } x \ge \re,\\
1 & \text{if } x < \re.
\end{cases}
\end{equation*}
Given a $\sigma$-algebra $\cF$ and $\cF$-measurable random variables $X$ and $Y$, we write $o_X^{\cF}(Y)$ to represent an $\cF$-measurable random variable such that for any $\eps >0$, there exists a finite deterministic constant $x_\eps$ for which $|o_X^{\cF}(Y)| \le \eps Y$ on the event $\{X \ge x_\eps\}$.

The next result, which is central to what follows, provides increment moment estimates for our Lyapunov functions,
and is contained in Lemma~3.4.1 of~\cite{mpw}, incorporating a minor correction to restore the factor of $1/2$ to the $\nu (\nu+1)$ term in~\eqref{eq:g-increment}; the $1/2$ factor arises from the second-order Taylor term
in the last display on p.~104 of~\cite{mpw}, but goes missing by equations~(3.23) and~(3.17) in that reference.

\begin{lemma}
\label{lem:lyapunov-function}
Suppose that~\eqref{ass:bounded-jumps} holds.
 Then, for $\gamma >0$,
\begin{align}
\label{eq:f-increment}
& {} \Exp \left(  f_\gamma ( X_{n+1}) - f_\gamma (X_n) \mid \cF_n \right) \nonumber\\
& {} \qquad {} = -\frac{\gamma}{2} \left[ 2 X_n \Exp ( \Delta_n \mid \cF_n ) - (1 + \gamma) \Exp ( \Delta_n^2 \mid \cF_n ) + o_{X_n}^{\cF_n}(1) \right] X_n^{-\gamma-2} ,
\end{align}
and, for $\nu > 0$,
\begin{align}
\label{eq:g-increment}
&{}  \Exp\left(  g_\nu ( X_{n+1}) - g_\nu (X_n) \mid \cF_n \right) \nonumber \\
&{} \qquad {} = - \frac{\nu}{2} \left[ 2 X_n \Exp ( \Delta_n \mid \cF_n ) - \Exp ( \Delta_n^2 \mid \cF_n )  \right] X_n^{-2} \log^{-\nu-1} X_n \nonumber \\ 
& {} \qquad \quad {} + \frac{1}{2}\nu(\nu+1) \Exp( \Delta_n^2 \mid \cF_n ) X_n^{-2} \log^{-\nu-2} X_n + o_{X_n}^{\cF_n}(X_n^{-2} \log^{-\nu-2} X_n)  .
\end{align}
\end{lemma}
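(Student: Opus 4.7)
The plan is to apply second-order Taylor expansion to the smooth functions $x \mapsto x^{-\gamma}$ and $x \mapsto (\log x)^{-\nu}$ at the point $X_n$ with increment $\Delta_n$, using the bounded-increments assumption~\eqref{ass:bounded-jumps} to make the remainder estimates uniform. The proofs of~\eqref{eq:f-increment} and~\eqref{eq:g-increment} are structurally identical; only the derivative computations and the scales of the remainders differ.

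First I would dispose of the piecewise cut-offs in the definitions of $f_\gamma$ and $g_\nu$. Since $|\Delta_n| \le B$ a.s., for $X_n \ge 2B$ both $X_n$ and $X_{n+1}$ lie in $[1,\infty)$, so $f_\gamma(X_n) = X_n^{-\gamma}$ and $f_\gamma(X_{n+1}) = X_{n+1}^{-\gamma}$; analogously for $g_\nu$ once $X_n \ge \re + B$. On the complementary event $\{X_n < y_0\}$ for fixed $y_0$, all quantities are bounded by a deterministic constant, which is absorbed into the $o_{X_n}^{\cF_n}$ term by the definition (with $x_\eps$ chosen $\ge y_0$).

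For~\eqref{eq:f-increment}, compute $f'_\gamma(x) = -\gamma x^{-\gamma-1}$, $f''_\gamma(x) = \gamma(\gamma+1)x^{-\gamma-2}$, and $|f'''_\gamma(y)| = O(y^{-\gamma-3})$ uniformly for $y \in [X_n/2, 3X_n/2]$. Taylor's theorem yields
\[
f_\gamma(X_n+\Delta_n) - f_\gamma(X_n) = -\gamma X_n^{-\gamma-1}\Delta_n + \tfrac{\gamma(\gamma+1)}{2} X_n^{-\gamma-2}\Delta_n^2 + R_n,
\]
with $|R_n| \le \tfrac{1}{6}|\Delta_n|^3 \sup_y |f'''_\gamma(y)| = O(X_n^{-\gamma-3})$ by~\eqref{ass:bounded-jumps}. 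Taking $\Exp(\,\cdot\,\mid\cF_n)$ and factoring $-\tfrac{\gamma}{2}X_n^{-\gamma-2}$ from the leading terms gives~\eqref{eq:f-increment}, with the $O(X_n^{-\gamma-3})$ remainder absorbed into $X_n^{-\gamma-2} o_{X_n}^{\cF_n}(1)$.

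For~\eqref{eq:g-increment}, the derivative computation yields $g'_\nu(x) = -\nu x^{-1}\log^{-\nu-1}(x)$ and, by differentiating once more, $g''_\nu(x) = \nu x^{-2}\log^{-\nu-1}(x) + \nu(\nu+1)x^{-2}\log^{-\nu-2}(x)$, while $g'''_\nu$ has terms dominated by $x^{-3}\log^{-\nu-1}(x)$. The same Taylor argument then decomposes the conditional increment into four pieces: the contribution of $g'_\nu(X_n)\Exp(\Delta_n\mid\cF_n)$, which supplies the $-\nu X_n^{-2}\log^{-\nu-1}(X_n) \cdot X_n\Exp(\Delta_n\mid\cF_n)$ term; the $\log^{-\nu-1}$ piece of $\tfrac12 g''_\nu(X_n)\Exp(\Delta_n^2\mid\cF_n)$, which combines with the previous term to form the Lamperti bracket $[2X_n\Exp(\Delta_n\mid\cF_n) - \Exp(\Delta_n^2\mid\cF_n)]$; the $\log^{-\nu-2}$ piece of $\tfrac12 g''_\nu(X_n)\Exp(\Delta_n^2\mid\cF_n)$, which provides the explicit $\nu(\nu+1)X_n^{-2}\log^{-\nu-2}(X_n)\Exp(\Delta_n^2\mid\cF_n)$ term; and finally the Taylor remainder $O(X_n^{-3}\log^{-\nu-1}(X_n))$, which fits into $o_{X_n}^{\cF_n}(X_n^{-2}\log^{-\nu-2}(X_n))$ because $\log(X_n)/X_n \to 0$.

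The main obstacle is purely one of bookkeeping: the $g_\nu$ expansion lives on three distinct scales, namely $X_n^{-2}\log^{-\nu-1}(X_n)$, $X_n^{-2}\log^{-\nu-2}(X_n)$, and $X_n^{-3}\log^{-\nu-1}(X_n)$, and one must carefully assign each contribution from the Taylor expansion either to one of the explicit leading terms or to the prescribed $o$-class. Once the algebraic grouping is done, the proof reduces to a routine verification.
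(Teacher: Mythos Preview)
The paper does not give its own proof of this lemma; it simply cites Lemma~3.4.1 of~\cite{mpw}. Your second-order Taylor expansion with a uniform Lagrange remainder controlled via~\eqref{ass:bounded-jumps} is exactly the standard argument, and is what that reference does.

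One minor bookkeeping slip: with your (correct) formula $g''_\nu(x) = \nu x^{-2}\log^{-\nu-1} x + \nu(\nu+1)x^{-2}\log^{-\nu-2}x$, the $\log^{-\nu-2}$ contribution from $\tfrac{1}{2} g''_\nu(X_n)\,\Exp(\Delta_n^2 \mid \cF_n)$ carries coefficient $\tfrac{\nu(\nu+1)}{2}$, not $\nu(\nu+1)$ as you assert and as~\eqref{eq:g-increment} displays. This appears to be a harmless factor-of-two typo in the paper's statement (it is only used in the proof of Lemma~\ref{lem:proest2} to choose $\nu$ sufficiently large, where the precise constant is irrelevant); your method itself is correct.
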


Throughout the paper we define, for $n \in \ZP$ and $x \in \RP$, the stopping times
\begin{equation}
\label{eq:tau-eta}
\tau_{n,x} := \min \{ m \ge n: X_m \le x\}, \text{ and }
\eta_{n,x} := \min \{ m \ge n : X_m >  x\} .\end{equation}
Here and elsewhere we adopt the usual convention that $\min \emptyset := \infty$.
The next two results present our main hitting probability estimates.

\begin{lemma}
\label{lem:proest}
Suppose that~\eqref{ass:bounded-jumps} and~\eqref{ass:variance} hold. Suppose also that the conditions~\eqref{eq:cond1} and~\eqref{eq:cond2} hold. 
For any $\delta, C$ with  $0< \delta < C < \infty$,  there exist constants $x_1, k_1, k_2 \in (0,\infty)$, not depending on~$x$, such that for all $x \ge x_1$
and all $y > 0$,
\begin{align}
\label{eq:calp1}
\frac{k_1}{x} \le \Pr(\tau_{n,x} = \infty \mid \cF_n) &\le \frac{k_2}{x}, \text{ on } \{x+ \delta \le X_n \le x+ C \}, \\
\label{eq:calp2}
\Pr(\eta_{n,x+y} < \tau_{n,x} \mid \cF_n) &\le \frac{k_2 (x+ y)}{xy},  \text{ on } \{X_n \le x+C \}. 
\end{align}
\end{lemma}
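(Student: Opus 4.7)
The plan is to use Lyapunov functions built from $f_\gamma$ (Lemma~\ref{lem:lyapunov-function}) together with optional stopping. Using~\eqref{eq:f-increment} with~\eqref{eq:cond1} and~\eqref{ass:variance}, together with the fact that $\omu_2\le B^2$ under~\eqref{ass:bounded-jumps}, I choose $\gamma>0$ small enough that $2X_n\Exp(\Delta_n\mid\cF_n)-(1+\gamma)\Exp(\Delta_n^2\mid\cF_n)$ is bounded below by a positive constant whenever $X_n$ is large; this makes $f_\gamma(X_n)$ a supermartingale on $\{X_n\ge x_1\}$ for some threshold $x_1$. Dually, using~\eqref{eq:cond2} and~\eqref{ass:variance}, I choose $\gamma'>0$ large enough that the same expression (with $\gamma'$ in place of $\gamma$) is bounded above by a negative constant, so that $f_{\gamma'}$ is a submartingale and $1-f_{\gamma'}$ a bounded supermartingale on the same tail (enlarging $x_1$ as needed).

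For~\eqref{eq:calp2}, I apply optional stopping to $f_\gamma$ at $T_N:=(\tau_{n,x}\wedge\eta_{n,x+y})\wedge N$ and let $N\to\infty$. For $n\le m<T_N$ one has $X_m\in(x,x+y]\subseteq[x_1,\infty)$, so the supermartingale property applies throughout. Using the pointwise bounds $f_\gamma(X_{\tau_{n,x}})\ge x^{-\gamma}$ and $f_\gamma(X_{\eta_{n,x+y}})\ge(x+y+B)^{-\gamma}$ (the latter by~\eqref{ass:bounded-jumps}), and discarding the non-negative contribution from the event $\{T_N>N\}$ when $N\to\infty$, one obtains
\[
X_n^{-\gamma}\ge x^{-\gamma}\Pr(\tau_{n,x}<\eta_{n,x+y}\mid\cF_n)+(x+y+B)^{-\gamma}\Pr(\eta_{n,x+y}<\tau_{n,x}\mid\cF_n) .
\]
Using $X_n\le x+C$ and rearranging yields
\[
\Pr(\eta_{n,x+y}<\tau_{n,x}\mid\cF_n)\le\frac{x^{-\gamma}-(x+C)^{-\gamma}}{x^{-\gamma}-(x+y+B)^{-\gamma}} .
\]
Estimating the numerator by $\gamma C\,x^{-\gamma-1}$ (Bernoulli's inequality) and the denominator from below by $c_0\min(y+B,x)\,x^{-\gamma-1}$ (by concavity of $u\mapsto 1-(1+u)^{-\gamma}$, with $c_0:=1-2^{-\gamma}>0$), case analysis on whether $y+B\le x$ or $y+B>x$ produces the required bound $k_2(x+y)/(xy)$.

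The lower bound in~\eqref{eq:calp1} then follows from the same identity on letting $y\to\infty$: since $\{\tau_{n,x}<\eta_{n,x+y}\}$ grows monotonically to $\{\tau_{n,x}<\infty\}$ as $y\to\infty$, and $(x+y+B)^{-\gamma}\to 0$, one obtains $\Pr(\tau_{n,x}<\infty\mid\cF_n)\le(x/X_n)^\gamma$. Then on $\{X_n\ge x+\delta\}$, Bernoulli's inequality gives $1-(x/(x+\delta))^\gamma\ge k_1/x$ for a suitable $k_1>0$ and all $x$ sufficiently large.

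For the upper bound in~\eqref{eq:calp1}, I apply optional stopping to the supermartingale $1-f_{\gamma'}$ at $\tau_{n,x}\wedge m$ and let $m\to\infty$. The key additional input is Lamperti's transience theorem~\cite{lamp1} (cited just before Theorem~\ref{thm:infcp}): under our hypotheses $X_m\to\infty$ a.s., so on $\{\tau_{n,x}=\infty\}$ we have $f_{\gamma'}(X_m)\to 0$, while on $\{\tau_{n,x}<\infty\}$ we use $1-f_{\gamma'}(X_{\tau_{n,x}})\le 1-x^{-\gamma'}$ (since $X_{\tau_{n,x}}\le x$). Bounded convergence then produces
\[
1-X_n^{-\gamma'}\ge(1-x^{-\gamma'})\Pr(\tau_{n,x}<\infty\mid\cF_n)+\Pr(\tau_{n,x}=\infty\mid\cF_n) ,
\]
which rearranges to $\Pr(\tau_{n,x}=\infty\mid\cF_n)\le 1-(x/X_n)^{\gamma'}$. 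Since $X_n\le x+C$, Bernoulli's inequality yields $(x/(x+C))^{\gamma'}\ge 1-\gamma'C/x$, hence $\Pr(\tau_{n,x}=\infty\mid\cF_n)\le \gamma'C/x$, completing the upper bound in~\eqref{eq:calp1}. The principal obstacle is the careful handling of the local supermartingale or submartingale properties---valid only on $\{X_n\ge x_1\}$, forcing $x\ge x_1$---together with the use of transience in the $m\to\infty$ limit for the upper bound.
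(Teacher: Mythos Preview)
Your overall plan (optional stopping with the Lyapunov functions $f_\gamma$ from Lemma~\ref{lem:lyapunov-function}) is the right one, but you have used the wrong exponent for~\eqref{eq:calp2}, and the ``rearranging'' step there reverses the inequality. From the supermartingale $f_\gamma$ (small $\gamma$) you correctly obtain $X_n^{-\gamma}\ge x^{-\gamma}\Pr(\tau<\eta)+(x+y+B)^{-\gamma}\Pr(\eta<\tau)$; but writing $q=\Pr(\eta<\tau)$ and $p=1-q$, this becomes $X_n^{-\gamma}\ge x^{-\gamma}-(x^{-\gamma}-(x+y+B)^{-\gamma})\,q$, which gives the \emph{lower} bound $q\ge (x^{-\gamma}-X_n^{-\gamma})/(x^{-\gamma}-(x+y+B)^{-\gamma})$, not your claimed upper bound. (As a sanity check: if the process marches deterministically to the right, $f_\gamma$ is trivially a supermartingale and $q=1$, contradicting your displayed bound.) The small-$\gamma$ supermartingale encodes~\eqref{eq:cond1}, i.e.\ transience, which pushes $q$ \emph{up}; to bound $q$ from above one must use the \emph{submartingale} $f_{\gamma'}$ with $\gamma'$ large (driven by~\eqref{eq:cond2} and~\eqref{ass:variance}), exactly as the paper does, and then the upper bound in~\eqref{eq:calp1} follows by letting $y\to\infty$ in~\eqref{eq:calp2}.

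There is also a smaller sign slip in your direct argument for the upper bound in~\eqref{eq:calp1}: from the supermartingale inequality for $1-f_{\gamma'}$ you need a \emph{lower} bound on $1-f_{\gamma'}(X_{\tau_{n,x}})$ to pass under the expectation, not the upper bound $1-f_{\gamma'}(X_{\tau_{n,x}})\le 1-x^{-\gamma'}$ that you quote. Using $X_{\tau_{n,x}}>x-B$ from~\eqref{ass:bounded-jumps} gives $1-f_{\gamma'}(X_{\tau_{n,x}})\ge 1-(x-B)^{-\gamma'}$, after which your argument (and its conclusion, with $x-B$ in place of $x$) goes through. Your derivation of the lower bound in~\eqref{eq:calp1} via $y\to\infty$ is correct and agrees with the paper.
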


\begin{lemma}
\label{lem:proest2}
Suppose that~\eqref{ass:bounded-jumps} and~\eqref{ass:variance} hold. 
\begin{itemize}
\item[(a)]
Suppose that there exists $D < \infty$ such that
\begin{equation}
\label{eq:cond3a}
\umu_1(x) \ge 0 ~ \text{and} ~
2x\omu_1(x) - \umu_2(x) \le  \frac{D}{\log x} ,
\end{equation}
for all $x$ sufficiently large.
For any $C < \infty$,  there exists a constant $k_3 \in (0,\infty)$, not depending on~$x$, such that for all $x \geq 1$,
\begin{align}
\label{eq:calp3} 
\Pr(\tau_{n,x} = \infty \mid \cF_n) \le \frac{k_3}{x \log  x},  \text{ on } \{ X_n \leq x+C \}. 
\end{align}
\item[(b)]
Suppose that~\eqref{eq:cond4} holds.
For any $\delta >0$ there exist constants $x_2, k_4 \in (0,\infty)$, not depending on~$x$, such that for all $x \geq x_2$,
\begin{align}
\label{eq:calp4} 
\Pr(\tau_{n,x} = \infty \mid \cF_n) \ge \frac{k_4}{x \log  x},  \text{ on } \{ X_n \geq x + \delta \}. 
\end{align}
\end{itemize}
\end{lemma}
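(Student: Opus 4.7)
The plan for both parts is to apply Lemma~\ref{lem:lyapunov-function} with Lyapunov function $g_\nu$ of appropriate index~$\nu$---large in~(a), small in~(b)---then invoke martingale convergence on the stopped process $Y_m := g_\nu(X_{m \wedge \tau_{n,x}})$, and finish with an elementary Taylor expansion of $\log$.

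For~(a), under~\eqref{eq:cond3a} the factor $2X_n \Exp(\Delta_n \mid \cF_n) - \Exp(\Delta_n^2 \mid \cF_n)$ is bounded above by $D/\log X_n$, so the first term in~\eqref{eq:g-increment} is at worst of order $-c_1 \nu X_n^{-2} \log^{-\nu-2}(X_n)$ (with $c_1$ proportional to $D$), while the second is at least of order $c_2 \nu(\nu+1) X_n^{-2} \log^{-\nu-2}(X_n)$ for some $c_2 > 0$ by~\eqref{ass:variance}. Choosing $\nu$ large enough (depending on $c_1,c_2$) makes the drift of $g_\nu(X_n)$ strictly positive for $X_n \ge x_1$, for some finite $x_1$. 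Hence, once $x \ge x_1$, $Y_m$ is a bounded nonnegative submartingale, converging a.s.\ and in $L^1$ to some $Y_\infty$ with $\Exp(Y_\infty \mid \cF_n) \ge g_\nu(X_n)$. On $\{\tau_{n,x} < \infty\}$, $Y_\infty = g_\nu(X_{\tau_{n,x}}) \le g_\nu(x - B)$ since $X_{\tau_{n,x}} \ge x - B$ by~\eqref{ass:bounded-jumps}; on $\{\tau_{n,x} = \infty\}$, the non-confinement~\eqref{ass:nonc} combined with the a.s.\ convergence of $Y_m$ forces $g_\nu(X_m) \to 0$, so $Y_\infty = 0$. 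Rearranging gives $\Pr(\tau_{n,x} < \infty \mid \cF_n) \ge g_\nu(X_n)/g_\nu(x - B)$, and for $X_n \le x + C$ the Taylor expansion
\[ \frac{g_\nu(x+C)}{g_\nu(x-B)} = \left(\frac{\log(x-B)}{\log(x+C)}\right)^{\!\nu} = 1 - \frac{\nu(B+C)}{x\log x} + O\!\left(\frac{1}{x^{2}\log x}\right) \]
delivers~\eqref{eq:calp3}, with the bound for $1 \le x < x_1$ rendered trivial by taking $k_3$ sufficiently large.

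Part~(b) is completely symmetric with inequalities reversed. Under~\eqref{eq:cond4}, the bounds $\Exp(\Delta_n \mid \cF_n) \ge \umu_1(X_n)$ and $\Exp(\Delta_n^2 \mid \cF_n) \le \omu_2(X_n)$ give $2X_n\Exp(\Delta_n \mid \cF_n) - \Exp(\Delta_n^2 \mid \cF_n) \ge (1+\theta)\Exp(\Delta_n^2 \mid \cF_n)/\log X_n$. Plugging this into~\eqref{eq:g-increment} and choosing $\nu > 0$ small enough (essentially $\nu$ strictly less than a suitable multiple of $\theta$) makes the drift of $g_\nu(X_n)$ strictly negative for $X_n \ge x_2$. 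Then $Y_m$ is a bounded nonnegative supermartingale for $x \ge x_2$, so $\Exp(Y_\infty \mid \cF_n) \le g_\nu(X_n)$; on $\{\tau_{n,x} < \infty\}$, $Y_\infty = g_\nu(X_{\tau_{n,x}}) \ge g_\nu(x)$ since $g_\nu$ is decreasing and $X_{\tau_{n,x}} \le x$; on $\{\tau_{n,x}=\infty\}$, $Y_\infty = 0$ as above. This yields $\Pr(\tau_{n,x} < \infty \mid \cF_n) \le g_\nu(X_n)/g_\nu(x)$, and Taylor expansion of $g_\nu(x+\delta)/g_\nu(x) = [\log x / \log(x+\delta)]^\nu = 1 - \nu\delta/(x\log x) + O(1/(x^2 \log x))$ for $X_n \ge x+\delta$ gives~\eqref{eq:calp4}. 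The main obstacle in each part is the calibration of $\nu$ so that the two terms of~\eqref{eq:g-increment} combine with the desired sign; this is tightest in~(b), where for small $\theta > 0$ the ``curvature'' contribution $\nu(\nu+1)\Exp(\Delta_n^2 \mid \cF_n)$ must be suppressed by taking $\nu$ small enough relative to~$\theta$ to keep the order-$1/\log x$ transience term dominant. Everything else is standard stopped-martingale bookkeeping.
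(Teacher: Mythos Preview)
Your argument is essentially the paper's: choose $\nu$ large (respectively small) so that $g_\nu(X_m)$ becomes a sub- (respectively super-) martingale above level~$x$, apply optional stopping, and Taylor-expand $g_\nu$. The only organisational difference is that in~(a) the paper stops two-sided at $\tau_{n,x}\wedge\eta_{n,x+y}$ and then sends $y\to\infty$, whereas you stop one-sided at $\tau_{n,x}$ and identify $Y_\infty$ directly; these are equivalent bookkeeping choices.

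One small caveat: in part~(a) you invoke~\eqref{ass:nonc} to conclude $Y_\infty=0$ on $\{\tau_{n,x}=\infty\}$, but~\eqref{ass:nonc} is not among the hypotheses of this lemma. The gap is easily filled: the drift of $g_\nu(X_m)$ that you establish is not merely nonnegative but bounded below by a strictly positive constant whenever $X_m$ lies in any bounded interval $(x,K]$, which forces $\Pr(\tau_{n,x}=\eta_{n,K}=\infty)=0$ for every~$K$, and hence $\limsup_m X_m=\infty$ on $\{\tau_{n,x}=\infty\}$. (The paper's version, in passing from $\Pr(\eta_{n,x+y}<\tau_{n,x})$ to $\Pr(\tau_{n,x}=\infty)$ as $y\to\infty$, implicitly uses the same fact.) In part~(b) the claim $Y_\infty=0$ is actually unnecessary: for the supermartingale direction you only need $Y_\infty\ge 0$ on $\{\tau_{n,x}=\infty\}$ to get $g_\nu(x)\,\Pr(\tau_{n,x}<\infty\mid\cF_n)\le\Exp(Y_\infty\mid\cF_n)\le g_\nu(X_n)$.
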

 
The rest of this section is devoted to the proofs of Lemmas~\ref{lem:proest} and~\ref{lem:proest2}.

\begin{proof}[Proof of Lemma~\ref{lem:proest}.]
The assumption~\eqref{eq:cond1} implies that
there exist $y_1 \geq 1$ and $\eps_0 >0$ such that $2x \umu_1(x) -\omu_2(x) \ge \eps_0$ for all $x \geq y_1$, and hence, for all $m \in \ZP$,
by~\eqref{eq:mu-properties},
\[
2 X_m \Exp ( \Delta_m \mid \cF_m ) - \Exp ( \Delta_m^2 \mid \cF_m ) \ge 2X_m \umu_1(X_m) -\omu_2(X_m) \ge \eps_0, \text{ on } \{ X_m \geq y_1 \} .
\]
Also, by~\eqref{ass:bounded-jumps},
$\Delta_m^2 \leq B^2$, a.s.
Thus by Lemma~\ref{lem:lyapunov-function}, we have that on $\{ X_m \geq y_2 \}$ for $y_2 > y_1$ sufficiently large and all $m \in \ZP$, 
\[
\Exp (  f_\gamma ( X_{m+1}) - f_\gamma (X_m) \mid \cF_m )
 \le -\frac{\gamma}{2} \left( \frac{\eps_0}{2} - \gamma B^2 \right) X_m^{\gamma-2}.
\]
Taking $\gamma >0$ sufficiently small, we then have that 
\begin{equation}
\label{eq:f-supermartingale}
\Exp (  f_\gamma ( X_{m+1}) - f_\gamma (X_m) \mid \cF_m ) \leq 0, \text{ on } \{ X_m \geq y_2 \} .
\end{equation}
Set $Y_m = f_\gamma (X_m)$, fix $n \in \ZP$, and take  $x \geq y_2$.
Then $( Y_{m \wedge \tau_{n,x}} ; m \geq n)$
is a non-negative supermartingale, by~\eqref{eq:f-supermartingale}, and hence  $Y_\infty = \lim_{m \to \infty} Y_{m \wedge \tau_{n,x}}$ a.s.~exists in $\RP$, and
(see e.g.~Theorem~2.3.11 of~\cite{mpw})
$\Exp ( Y_{\tau_{n,x} } \mid \cF_n ) \leq Y_n$, a.s.
It follows that 
\[ f_\gamma (X_n ) \geq \Exp ( Y_{\tau_{n,x}} \1 { \tau_{n,x} < \infty } \mid \cF_n )
\geq f_\gamma (x) \Pr ( \tau_{n,x} < \infty \mid \cF_n ) ,\]
since $f_\gamma$ is non-increasing and $X_{\tau_{n,x}} \le x$ on $\{ \tau_{n,x} < \infty \}$. 
In particular, on $\{X_n \geq x + \delta \}$,
\begin{align*}
 \Pr ( \tau_{n,x} = \infty \mid \cF_n ) & \geq \frac{f_\gamma (x) -f_\gamma (X_n)}{f_\gamma (x) } \\
& \geq 1- \frac{f_\gamma (x+\delta)}{f_\gamma (x) }   = 1 - \left( 1 - \frac{\delta}{x+\delta} \right)^\gamma ,\end{align*}
for $x \geq y_2 \geq 1$.  Hence we get the lower bound in \eqref{eq:calp1}.

For the upper bounds in the lemma, we have from~\eqref{ass:bounded-jumps} and the assumption~\eqref{eq:cond2} that there exists $C < \infty$ such that $2x \omu_1(x) \le C$ for all $x \geq 0$.
  Also, by~\eqref{ass:variance} we know that there exist $y_1 \geq 1$ and $\delta >0$
	such that $\umu_2(x) \ge \delta$ for $x \geq y_1$. Hence by~\eqref{eq:mu-properties},
\[
2 X_m \Exp ( \Delta_m \mid \cF_m ) - (\gamma +1 ) \Exp( \Delta_m^2 \mid \cF_m ) \le C - (\gamma +1 ) \delta, \text{ on } \{ X_m \geq y_1 \}.
\]
Thus by Lemma~\ref{lem:lyapunov-function}, we get, for $y_2 > y_1$ sufficiently large
and all $x \geq y_2$,
\[
\Exp (  f_\gamma (X_{m+1}) - f_\gamma (X_m) \mid \cF_m )
 \ge -\frac{\gamma}{2} \left(C + 1 - (\gamma +1 ) \delta \right) X_m^{\gamma-2}, \text{ on } \{ X_m \geq x \}.
\]
Taking~$\gamma >1$ sufficiently large, we thus obtain, for any $x \geq y_2$,
\begin{equation}
\label{eq:f-submartingale}
\Exp (  f_\gamma ( X_{m+1}) - f_\gamma (X_m) \mid \cF_m ) \geq 0, \text{ on } \{ X_m \geq x \} .
\end{equation}
Fix $n \in \ZP$ and $y>0$, and
set $Y_m = f_\gamma (X_m)$.
The stopping times $\eta_{n,x+y}$ are a.s.~finite, by assumption~\eqref{ass:nonc}.
Then $( Y_{m \wedge \tau_{n,x} \wedge \eta_{n,x+y}} ; m \geq n)$
is a uniformly bounded submartingale, by~\eqref{eq:f-submartingale}, 
with limit $Y_{\tau_{n,x} \wedge \eta_{n,x+y}}$, 
and, by optional stopping
(see e.g.~Theorem~2.3.7 of~\cite{mpw}),
$\Exp ( Y_{\tau_{n,x} \wedge \eta_{n,x+y}} \mid \cF_n ) \geq Y_n$, a.s.
In particular, 
\[  f_\gamma (x + C) \leq f_\gamma ( X_n )   \leq \Exp ( Y_{\tau_{n,x} \wedge \eta_{n,x+y}} \mid \cF_n ), \text{ on } \{ X_n \leq x + C \}. \]
Hence, on $\{ X_n \leq x + C\}$,
\begin{align*}
f_\gamma (x + C)  & \leq \Exp \bigl( f_\gamma (X_{\tau_{n,x}} ) \1 {\tau_{n,x} < \eta_{n,x+y} } \bigmid \cF_n\bigr)
+ \Exp \bigl( f_\gamma (X_{\eta_{n,x+y}} ) \1 {\eta_{n,x+y} < \tau_{n,x}  } \bigmid \cF_n \bigr) \\
& \leq f_\gamma (x - B) \Pr ( \tau_{n,x} < \eta_{n,x+y} \mid \cF_n ) +
f_\gamma (x+y ) \Pr ( \eta_{n,x+y} < \tau_{n,x}  \mid \cF_n ) ,\end{align*}
where we have used the fact that $f_\gamma$ is non-increasing and
 $X_{\tau_{n,x}} \geq x - B$
on $\{ \tau_{n,x} < \infty\}$, by~\eqref{ass:bounded-jumps}.
It follows that, on $\{ X_n \leq x + C \}$,
\begin{equation}
\label{eq:submartingale-prob}  \Pr ( \eta_{n,x+y} < \tau_{n,x} \mid \cF_n ) 
\leq  \frac{f_\gamma (x - B) - f_\gamma ( x + C)}{f_\gamma (x-B) - f_\gamma (x+y)} .\end{equation}
Here, for $x > 1+B$,
\begin{align*}
 \frac{f_\gamma (x - B) - f_\gamma ( x + C)}{f_\gamma (x-B) - f_\gamma (x+y)}  & = \frac{(x-B)^{-\gamma}
- (x+C)^{-\gamma}}{(x-B)^{-\gamma} - (x+y)^{-\gamma}} \\
& = \frac{1 - \left( 1 + \frac{B+C}{x-B} \right)^{-\gamma}}{1 - \left( \frac{x-B}{x+y} \right)^\gamma} \\
& = \left( 1 - \left( 1 + \frac{B+C}{x-B} \right)^{-\gamma} \right)
\left( \frac{x+y}{y+B} \right) \left( \frac{1- \theta}{1 - \theta^\gamma } \right) ,\end{align*}
where $\theta = \frac{x-B}{x+y}$. Since~$\gamma >1$, we have that for all $\theta \in (0,1)$, 
$\frac{1- \theta}{1 - \theta^\gamma } < 1$. Hence, by~\eqref{eq:submartingale-prob},
\[ \Pr ( \eta_{n,x+y} < \tau_{n,x} \mid \cF_n ) 
\leq  \left( 1 - \left( 1 + \frac{B+C}{x-B} \right)^{-\gamma} \right)
\left( \frac{x+y}{y+B} \right) \leq \frac{k_2 (x+y)}{xy} ,\]
for all $x$ sufficiently large and all $y > 0$, 
which gives the upper bound in~\eqref{eq:calp2}.
Moreover, it follows from~\eqref{ass:bounded-jumps} that $\eta_{n,x+y} \to \infty$ a.s.~as $y \to \infty$,
so $\tau_{n,x} = \infty$ if and only if $\tau_{n,x} > \eta_{n,x+y}$ for all $y \in \N$.
Hence,
 since the events $\{\tau_{n,x} > \eta_{n,x+y}\}$ are decreasing in $y$,
\begin{align*}
\Pr(\tau_{n,x} = \infty \mid \cF_n ) &= \Pr\Bigl( \bigcap_{y \in \N} \{ \eta_{n,x+y} < \tau_{n,x} \} \Bigmid \cF_n \Bigr) = \lim_{y \to \infty} \Pr (\eta_{n,x+y} < \tau_{n,x}  \mid \cF_n) 
.\end{align*}
Together with~\eqref{eq:calp2}, this yields the upper bound in \eqref{eq:calp1}.
\end{proof}

\begin{proof}[Proof of Lemma~\ref{lem:proest2}]
For part~(a), 
the idea is similar to the proof of the upper bound in~\eqref{eq:calp1}.
By~\eqref{eq:cond3a} and~\eqref{ass:variance},
 there exist $y_1 \in \RP$ and $\delta >0$ so that,
on $\{X_m > y_1 \}$, 
\begin{align*}
&{} \left( 2 X_m \Exp ( \Delta_m \mid \cF_m ) - \Exp( \Delta_m^2 \mid \cF_m )  \right)\log X_m -  (\nu+1) \Exp ( \Delta_m^2 \mid \cF_m ) \nonumber \\
& \qquad{} \le \left( 2X_m \omu_1(X_m) - \umu_2(X_m)  \right)\log X_m -  (\nu+1) \umu_2(X_m) \nonumber \\
& \qquad{} \le D -  (\nu + 1)\delta . 
\end{align*}
In particular, if we take $\nu >0$ large enough so that
$D -  (\nu +1) \delta < 0$, then we have from Lemma~\ref{lem:lyapunov-function} that, for all $x \geq y_2$ with $y_2$ sufficiently large,
\[ \Exp (  g_\nu ( X_{m+1}) - g_\nu (X_m) \mid \cF_m  ) \ge 0, \text{ on } \{ X_m> x \}. \] 

Fix $n \in \ZP$ and $y>0$, and
set $Y_m = g_\nu (X_m)$.
Then $( Y_{m \wedge \tau_{n,x} \wedge \eta_{n,x+y}} ; m \geq n)$
is a uniformly bounded submartingale, 
with limit $Y_{\tau_{n,x} \wedge \eta_{n,x+y}}$, 
and, by optional stopping
$\Exp ( Y_{\tau_{n,x} \wedge \eta_{n,x+y}} \mid \cF_n ) \geq Y_n$, a.s.
In particular, on $\{ X_n \leq x + C \}$,
\begin{align*}
 g_\nu (x + C) \leq g_\nu ( X_n ) & \leq \Exp ( Y_{\tau_{n,x} \wedge \eta_{n,x+y}} \mid \cF_n ) \\
& \leq g_\nu (x - B) \Pr ( \tau_{n,x} < \eta_{n,x+y} \mid \cF_n ) +
g_\nu (x+y ) \Pr ( \tau_{n,x} > \eta_{n,x+y} \mid \cF_n ) .\end{align*} 
It follows that, on $\{ X_n \leq x + C \}$,
\begin{equation}
\label{eq:submartingale-prob-g}  \Pr ( \eta_{n,x+y} < \tau_{n,x} \mid \cF_n ) 
\leq  \frac{g_\nu (x - B) - g_\nu ( x + C)}{g_\nu (x-B) - g_\nu (x+y )} .\end{equation}
Since $\eta_{n,x+y} \to \infty$ as $y \to \infty$, we get, on $\{X_n \le x+C\}$,  
\begin{align*}
\Pr( \tau_{n,x} = \infty \mid \cF_n )  
& = \lim_{y \to \infty}\Pr ( \eta_{n,x+y} < \tau_{n,x} \mid \cF_n ) \\
& \leq  \frac{g_\nu (x - B) - g_\nu ( x + C)}{g_\nu (x-B)} ,
\end{align*}
by~\eqref{eq:submartingale-prob-g}. It follows that, on $\{X_n \le x+ C\}$, 
\begin{align*}
\Pr(\tau_{n,x} = \infty \mid \cF_n) &\le 1- \frac{\log^{-\nu} (x+C) }{\log^{-\nu} (x-B)}  \le \frac{k_3}{x \log x} ,
\end{align*}
for all $x \geq y_2$ and some positive constant $k_3$. This proves part~(a).

The proof of part~(b) is similar to the proof of the lower bound in~\eqref{eq:calp1},
using the function $g_\nu$ rather than $f_\gamma$, and we omit the details.
\end{proof}

\section{Proof of Theorem~\ref{thm:infcp}: Infinitely many cutpoints}
\label{sec:proof1}

Throughout this section we suppose that the hypotheses of Theorem~\ref{thm:infcp} are satisfied.
Note that~\eqref{eq:cond1} and the fact that $\omu_2 (x) \geq 0$ implies that $\liminf_{x \to \infty} \umu_1 (x) \geq 0$,
so that Lemma~\ref{lem:ellipticity} applies.

Lemma~\ref{lem:proest} shows that the probability of having a cutpoint located around $x$ is about $1/x$.
If the (harder half of the) Borel--Cantelli lemma were applicable, this would suggest that
there are infinitely many cutpoints. However, these events  are not independent across different values of $x$.
To use an appropriate version of the Borel--Cantelli lemma, we will bound the probability
that (roughly speaking) \emph{both} $x$ and $y$ are cutpoints using~\eqref{eq:calp2}. This yields
a \emph{positive} probability that there are infinitely many cutpoints, and then
an appeal to a zero--one law gives the result. This is essentially the same approach as is taken in~\cite[\S 2]{jp} and~\cite[\S 4]{cfr};
our approach makes it clear that the Markov property is not essential. 
We set this up more precisely.

Let $B < \infty$ and $\eps >0$ be the constants appearing in~\eqref{ass:bounded-jumps} and Lemma~\ref{lem:ellipticity}, respectively.
Fix $h > 0$ and $k \in \N$. Choose $\ell \in \N$ such that 
\begin{equation}
\label{eq:ell-bound} \ell\eps   > \max ( h, B k). \end{equation}
Let $n \in \ZP$.
Recall the definitions of $\tau_{n,x}$ and $\eta_{n,x}$ from~\eqref{eq:tau-eta}.
For $x \in \RP$ and $i \in \N$, define the events $E_{n,i,x} := \{ X_{\eta_{n,x} +i} - X_{\eta_{n,x}+i-1} > \eps \}$, and set
\[ A_{n,x} := \left( \bigcap_{i=1}^{2\ell} E_{n,i,x} \right) \cap \{ \tau_{\eta_{n,x}+2\ell,x+\ell\eps} = \infty \} .\]
In words, $A_{n,x}$ occurs if, on its first passage after time $n$ into $[x,\infty)$  the process
takes in succession $2\ell$ positive steps of size at least $\eps$ and subsequently never returns to $[0,x+\ell\eps]$.
If the process never visits $[x,\infty)$ before time $n$ and then $A_x$ occurs, all visits to the interval $I_x := [x,x+ \ell\eps]$
are strong cutpoints. More precisely, we have the following.

\begin{lemma}
\label{lem:Ax}
Suppose that~\eqref{ass:bounded-jumps} holds. Then
for all $x \geq X_0+Bn$, 
$A_{n,x}$ implies that $I_x$ is an $(h,k)$ cut interval.
\end{lemma}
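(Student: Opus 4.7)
The plan is to verify, directly from Definition~\ref{def:cutpoints}(iii), each of the three properties required of an $(h,k)$ cut interval. The length condition $|I_x| = \ell\eps \geq h$ is immediate from~\eqref{eq:ell-bound}. For the other two conditions I would extract the sample-path geometry forced by $A_x$ together with the bounded-increments assumption~\eqref{ass:bounded-jumps}.

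First I would use $x \geq X_0 + Bn$ and~\eqref{ass:bounded-jumps} to deduce $X_m \leq X_0 + Bm \leq x$ for all $m \leq n$, so that $X_m \leq x$ for every $m < \eta_{n,x}$, while $X_{\eta_{n,x}} \leq x + B$ by a single bounded jump. The events $E_{i,x}$ defining $A_x$ force $X_{\eta_{n,x}}, X_{\eta_{n,x}+1}, \ldots, X_{\eta_{n,x}+2\ell}$ to be strictly increasing with consecutive gaps exceeding $\eps$, and combined with~\eqref{ass:bounded-jumps} this gives, for each $j = 0, 1, \ldots, 2\ell$, the sandwich $x + j\eps < X_{\eta_{n,x}+j} \leq x + (j+1)B$. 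The $\tau$-clause built into $A_x$ additionally gives $X_m > x + \ell\eps$ for all $m \geq \eta_{n,x}+2\ell$.

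The choice~\eqref{eq:ell-bound} delivers $kB < \ell\eps$, which fed into the above sandwich yields $X_{\eta_{n,x}+j} \in (x, x+\ell\eps)$ for $j = 0, 1, \ldots, k-1$, placing at least $k$ trajectory points in the interior of $I_x$. Moreover, by strict monotonicity on the ascent together with the $\tau$-clause thereafter, the full set of trajectory visits to the interior of $I_x$ forms a contiguous block $\{X_{\eta_{n,x}+j} : 0 \leq j \leq J\}$ for some $J$ with $k-1 \leq J \leq 2\ell-1$. Finally, to show each such visit is a strong cutpoint I would fix $j \leq J$, set $y := X_{\eta_{n,x}+j}$, and verify $X_m < y$ for $m < \eta_{n,x}+j$ (using $X_m \leq x < y$ before time $\eta_{n,x}$ combined with the strict ascent) and $X_m > y$ for $m > \eta_{n,x}+j$ (using the strict ascent on $(\eta_{n,x}+j, \eta_{n,x}+2\ell]$ together with $X_m > x + \ell\eps > y$ for $m \geq \eta_{n,x}+2\ell$, the last inequality using $j \leq J$). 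The proof is essentially bookkeeping and I anticipate no substantive obstacle; the only point genuinely requiring care is ruling out a return to the interior after the monotone ascent, which is precisely why the $\tau$-clause is built into $A_x$.
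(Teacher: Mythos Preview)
Your proposal is correct and follows essentially the same approach as the paper's proof: both extract the sample-path picture forced by $A_x$ (pre-$\eta_{n,x}$ values at most $x$ via~\eqref{ass:bounded-jumps} and $x\geq X_0+Bn$, a strictly increasing block of $2\ell$ steps each of size in $(\eps,B]$, and no return below $x+\ell\eps$ thereafter), then read off the length bound from~\eqref{eq:ell-bound}, the $k$ interior points from the upper sandwich $X_{\eta_{n,x}+j}\leq x+(j+1)B<x+\ell\eps$ for $j\leq k-1$, and the strong-cutpoint property from monotonicity plus the $\tau$-clause. Your version is slightly more explicit in isolating the contiguous visit block $\{X_{\eta_{n,x}+j}:0\le j\le J\}$, but the substance is identical.
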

\begin{proof}
On the event $A_{n,x}$, we have that
$X_m < x$ for all $m$ with $n \leq m < \eta_{n,x}$, 
 $x \leq X_{\eta_{n,x}} < X_{\eta_{n,x} +1} < \cdots < X_{\eta_{n,x} + 2 \ell}$,
$X_{\eta_{n,x} + 2 \ell}   >  x + 2 \ell\eps$, and, for all $m \geq \eta_{n,x} + 2\ell$, $X_m > x +\ell\eps$.
Thus $A_{n,x} \cap \{ \max_{0 \leq m \leq n} X_m < x \}$ implies that 
every point of $X$ in the interval $I_x$ is  a strong cutpoint. 
In particular, by~\eqref{ass:bounded-jumps},
for fixed $n \in \ZP$ and all $x > X_0 + Bn$, $A_{n,x}$ implies every point of $X$ in the interval $I_x$ is  a strong cutpoint.

Moreover, if $x \geq X_0 + Bn$,
on $A_{n,x}$ we have from~\eqref{ass:bounded-jumps} that,
for $0 \leq m \leq k-1$, $x < X_{\eta_{n,x}+m} < x + B (m+1) < x + B k < x + \ell\eps$, by~\eqref{eq:ell-bound}.
Thus the event $A_{n,x}$ implies that the interval $I_x$ contains at least $k$ values of $X$, and the interval length is $\ell\eps > h$, by~\eqref{eq:ell-bound}. This gives the result. \end{proof}

Set $q := \max(1, 2 \ell\eps)$. Then for $x, y \in \ZP$ with $x < y$,
 intervals $I_{qx}$ and $I_{qy}$ are disjoint. 
Thus to show that there exist infinitely many  $(h,k)$ cut intervals,
it suffices to show that $A_{n,qx}$ occurs for infinitely many $x \in \ZP$
(we write this event as `$A_{n,qx} \io$'). 
First we show that this event has strictly positive probability, uniformly
over $\cF_n$ for any $n$.

\begin{lemma}
\label{lem:positive-prob}
Under the hypotheses of Theorem~\ref{thm:infcp}, 
for any $\ell \in \N$ satisfying~\eqref{eq:ell-bound}
and any
$q > 0$, 
there is a constant $\delta >0$ such that, for all $n \in \ZP$, 
$\Pr ( A_{n,qx} \io \mid \cF_n ) \geq \delta$, a.s.
\end{lemma}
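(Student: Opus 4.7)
The plan is to execute a conditional Paley--Zygmund (second-moment) argument on the counting variable
\[ N_M := \sum_{x=1}^M \1{A_{qx}} . \]
If I can establish, with constants independent of $n$ and $M$, both $\Exp ( N_M \mid \cF_n ) \geq c_1 \log M$ and $\Exp ( N_M^2 \mid \cF_n ) \leq C_2 (\log M)\, \Exp ( N_M \mid \cF_n )$ for all $M$ sufficiently large, then the conditional Paley--Zygmund inequality produces a deterministic $\delta > 0$ with $\Pr ( N_M \geq \tfrac{1}{2} c_1 \log M \mid \cF_n ) \geq \delta$ a.s. for all such $M$. Since $\{ N_\infty = \infty \} \supseteq \limsup_M \{ N_M \geq \tfrac{1}{2} c_1 \log M \}$, reverse Fatou for conditional probabilities then gives $\Pr ( \{A_{qx}\} \io \mid \cF_n ) \geq \delta$, which is the stated conclusion.

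For the first-moment lower bound it suffices to show $\Pr ( A_{qx} \mid \cF_n ) \geq c/(qx)$ for $qx$ large. First I would fix such an $x$, set $\eta := \eta_{n,qx}$ (almost surely finite by~\eqref{ass:nonc}), and iterate Lemma~\ref{lem:ellipticity} conditionally along $\cF_{\eta+1}, \ldots, \cF_{\eta+2\ell}$ to obtain $\Pr ( E_{1,qx} \cap \cdots \cap E_{2\ell,qx} \mid \cF_\eta ) \geq \eps^{2\ell}$; the hypothesis $\liminf \umu_1 \geq 0$ needed for that lemma follows because~\eqref{eq:cond1} combined with $\omu_2 \leq B^2$ from~\eqref{ass:bounded-jumps} forces $2x \umu_1(x)$ to be eventually positive. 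On the intersection of these events, \eqref{ass:bounded-jumps} confines $X_{\eta+2\ell}$ to $[qx + 2\ell\eps, qx + (2\ell+1)B]$, so the lower bound in~\eqref{eq:calp1} applied with barrier $qx + \ell\eps$ yields $\Pr ( \tau_{\eta+2\ell, qx+\ell\eps} = \infty \mid \cF_{\eta+2\ell} ) \geq k_1/(qx + \ell\eps)$. Towering these estimates produces $\Pr ( A_{qx} \mid \cF_n ) \geq c_1/(qx)$, and summing over $x \leq M$ delivers $\Exp ( N_M \mid \cF_n ) \geq c_1 \log M + O(1)$.

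For the second moment, for $x < y$ I would condition at the stopping time $\tau_x := \eta_{n,qx} + 2\ell$, at which the process sits in $[qx + 2\ell\eps, qx + (2\ell+1)B]$ on $A_{qx}$. The event $A_{qy}$ is then expressible in terms of the first passage of $X$ into $[qy, \infty)$ after $\tau_x$, followed by the analogous $2\ell$ positive-increment and no-return conditions at level $qy$; the same argument that produced the upper bound in~\eqref{eq:calp1} delivers $\Pr ( A_{qy} \mid \cF_{\tau_x} ) \leq k_2/(qy)$. Consequently
\[ \Pr ( A_{qx} \cap A_{qy} \mid \cF_n ) \leq \frac{k_2}{qy}\, \Pr ( A_{qx} \mid \cF_n ), \]
and summing the pairs gives $\Exp ( N_M^2 \mid \cF_n ) \leq \Exp ( N_M \mid \cF_n ) + C (\log M)\, \Exp ( N_M \mid \cF_n )$, closing the Paley--Zygmund loop with $\delta = c_1 /(4C)$.

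The principal obstacle is the absence of the Markov property: one cannot simply restart the process at $\eta_{n,qx}$ or $\tau_x$ and read off the relevant one-step probabilities from a homogeneous transition kernel. This is circumvented by the fact that Lemmas~\ref{lem:ellipticity} and~\ref{lem:proest} are stated for general adapted processes and produce their conclusions as \emph{uniform} conditional bounds on position events with respect to the ambient filtration. Tower-property conditioning therefore replaces any appeal to strong Markov, and no further independence between different $x$-values is needed once the pairwise bound above is in hand.
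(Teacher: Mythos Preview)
Your overall architecture is sound and matches the paper's: a second-moment method (the paper packages it as the Kochen--Stone lemma, which is just Paley--Zygmund in disguise), with the first-moment lower bound obtained exactly as you describe. The final passage from ``$\Pr(\{A_{qx}\}\io\mid\cF_n)\ge\delta$'' to the statement is also fine.

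The gap is in the pairwise bound. You write
\[
\Pr(A_{qx}\cap A_{qy}\mid\cF_n)\le\frac{k_2}{qy}\,\Pr(A_{qx}\mid\cF_n),
\]
justifying it by conditioning at $\tau_x:=\eta_{n,qx}+2\ell$ and using $\Pr(A_{qy}\mid\cF_{\tau_x})\le k_2/(qy)$. The latter bound is correct, but you cannot pull $\2{A_{qx}}$ outside the inner conditional expectation, because $A_{qx}=E_{qx}\cap D_{qx}$ and the no-return event $D_{qx}=\{\tau_{\tau_x,qx+\ell\eps}=\infty\}$ is \emph{not} $\cF_{\tau_x}$-measurable: it depends on the entire future of the process. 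So the tower step you invoke fails exactly here, and in the absence of the Markov property there is no reason the displayed inequality should hold.

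The paper's remedy is to observe that, up to null sets,
\[
A_{qx}\cap A_{qy}=E_{qx}\cap F_{x,y}\cap E_{qy}\cap D_{qy},
\]
where $F_{x,y}=\{\eta_{\tau_x,qy}<\tau_{\tau_x,qx+\ell\eps}\}$ is the \emph{finite-horizon} surrogate for $D_{qx}$ and is $\cF_{\eta_{n,qy}}$-measurable. One then needs, in addition to the escape estimate~\eqref{eq:calp1} for $D_{qy}$, the gambler's-ruin type estimate~\eqref{eq:calp2} for $F_{x,y}$, which you do not invoke. Combining the two gives $\Pr(A_{qx}\cap A_{qy}\mid\cF_n)\le C/(qx\cdot q(y-x))$, and the double sum is then $O(\log^2 M)$, which suffices for Paley--Zygmund since $\Exp(N_M\mid\cF_n)\asymp\log M$. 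Once you insert this decomposition and the use of~\eqref{eq:calp2}, your plan goes through.
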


To prove Lemma~\ref{lem:positive-prob}, we will apply
the following  conditional version of the Kochen--Stone lemma~\cite{ks}.

\begin{lemma}
\label{lem:kochen-stone}
On a probability space $(\Omega,\cF,\Pr)$, let $A_1, A_2, \ldots$
be   events and  $\cG \subseteq \cF$  a $\sigma$-algebra.
Let $a \in \N$ be $\cG$-measurable.
Suppose that $\sum_{m=1}^\infty \Pr (A_m \mid \cG) = \infty$, a.s. Then,
\begin{equation}
\label{eq:kochen-stone}
\Pr(A_m \io \mid \cG) \geq \limsup_{m \to \infty} \frac{ \sum_{i = a}^m \sum_{j=1}^{m-i} \Pr(A_i \mid \cG) \Pr(A_{i+j} \mid \cG) }{\sum_{i=a}^m \sum_{j=1}^{m-i} \Pr(A_i \cap A_{i+j} \mid \cG)}, \as
\end{equation}
\end{lemma}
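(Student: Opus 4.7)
My plan is to perform a conditional second-moment argument à la Paley--Zygmund. For integers $b \leq N$, define
\[ X_{b,N} := \sum_{m=b}^N \1{A_m}, \]
and note that $\{ X_{b,N} \geq 1 \} = \bigcup_{m=b}^N A_m$. Applying the conditional Cauchy--Schwarz inequality to the identity $X_{b,N} = X_{b,N} \1{X_{b,N} \geq 1}$ gives, a.s.,
\[ \Pr\Bigl( \bigcup_{m=b}^N A_m \Bigmid \cG \Bigr) \geq \frac{(\Exp(X_{b,N} \mid \cG))^2}{\Exp(X_{b,N}^2 \mid \cG)} = \frac{\bigl(\sum_{i=b}^N p_i\bigr)^2}{\sum_{i,j=b}^N q_{ij}} , \]
writing $p_i := \Pr(A_i \mid \cG)$ and $q_{ij} := \Pr(A_i \cap A_j \mid \cG)$. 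Setting $S_1 := \sum_{i=b}^N p_i$ and introducing the off-diagonal sums appearing in the statement,
\[ S_2 := \sum_{i=b}^N \sum_{j=1}^{N-i} p_i p_{i+j} , \qquad T_2 := \sum_{i=b}^N \sum_{j=1}^{N-i} q_{i,i+j} , \]
the right-hand side equals $(2 S_2 + \sum_i p_i^2)/(2 T_2 + S_1)$.

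I would next show that, as $N \to \infty$, the diagonal contributions are asymptotically negligible, so that the Paley--Zygmund ratio and the target ratio $S_2/T_2$ share the same limit superior. Because $\sum_m p_m = \infty$ a.s., $S_1 \to \infty$ a.s.; combined with $\sum_i p_i^2 \leq S_1$ and the identity $2 S_2 = S_1^2 - \sum_i p_i^2$, this gives $\sum_i p_i^2 = o(S_2)$. For the denominator there are two regimes. If $\limsup_N S_2/T_2 < \infty$ along a subsequence, then on that subsequence $T_2$ grows at rate $\Omega(S_1^2)$, so $S_1 = o(T_2)$ and hence $(2 S_2 + \sum_i p_i^2)/(2 T_2 + S_1) \sim S_2/T_2$. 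Otherwise $\limsup_N S_2/T_2 = \infty$, and one verifies directly (splitting on whether $T_2$ stays bounded or diverges) that the Paley--Zygmund ratio itself tends to $\infty$, so $\Pr(\bigcup_{m \geq b} A_m \mid \cG) = 1$ and the target inequality holds trivially since probabilities are bounded by~$1$.

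Taking $N \to \infty$ and using $\bigcup_{m=b}^N A_m \uparrow \bigcup_{m \geq b} A_m$ now yields, for each fixed $b \in \N$,
\[ \Pr\Bigl( \bigcup_{m \geq b} A_m \Bigmid \cG \Bigr) \geq \limsup_{N \to \infty} \frac{S_2(N;b)}{T_2(N;b)}, \as \]
where $S_2(N;b)$ and $T_2(N;b)$ make the dependence on the starting index $b$ explicit. To transfer this bound to $\{A_m \io\}$, observe that shifting the starting index from $a$ to $b \geq a$ removes only $\sum_{i=a}^{b-1} \sum_j p_i p_{i+j}$ from $S_2$ and $\sum_{i=a}^{b-1} \sum_j q_{i,i+j}$ from $T_2$, both of which are $O(S_1(N;a))$; in the non-trivial regime where $T_2$ grows, this is $o(T_2(N;a))$, so $\limsup_N S_2(N;b)/T_2(N;b) = \limsup_N S_2(N;a)/T_2(N;a)$ for all $b \geq a$. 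Letting $b \to \infty$ and using $\Pr(A_m \io \mid \cG) = \lim_b \Pr(\bigcup_{m \geq b} A_m \mid \cG)$ then delivers~\eqref{eq:kochen-stone}.

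The main obstacle will be the case analysis in the second paragraph: verifying in all regimes (in particular when $T_2$ does not diverge) that the diagonal terms are negligible and that the limit superior is stable under shifting the starting index. The $\cG$-measurability of $a$ poses no additional difficulty, since the bounds hold pointwise a.s.\ and $a(\omega)$ is a fixed integer for each $\omega$.
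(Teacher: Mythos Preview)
Your approach is essentially the one the paper indicates (and then omits): a conditional Paley--Zygmund/Cauchy--Schwarz bound yields the conditional Chung--Erd\H{o}s inequality, and then one passes from $\bigcup_{m\geq b} A_m$ to $\{A_m \io\}$ by letting $b\to\infty$ and checking the ratio is insensitive to finite shifts of the starting index. So the route is the same.

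One small clean-up: your ``otherwise'' case, where $\limsup_N S_2/T_2 = \infty$, cannot actually occur, and your claim there that the Paley--Zygmund ratio tends to $\infty$ is impossible (it is a conditional probability, hence $\leq 1$). In fact, from $2T_2 + S_1 \geq S_1^2$ (the Paley--Zygmund bound is $\leq 1$) and $2S_2 \leq S_1^2$ you get $S_2/T_2 \leq S_1/(S_1-1)\to 1$, so $\limsup_N S_2/T_2 \leq 1$ always. This makes your first case the only one, and there the argument you give is correct: along the subsequence achieving the $\limsup$, $T_2 \gtrsim S_1^2$, so both diagonal corrections $\sum_i p_i^2 \leq S_1$ and $S_1$ are $o(T_2)$ (and $o(S_2)$), and the two ratios have the same $\limsup$. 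The same estimate $T_2 \gtrsim S_1^2$ also justifies your claim that shifting the starting index changes $S_2$ and $T_2$ by $O(S_1)=o(T_2)$, so the $\limsup$ is unchanged. With that vacuous branch removed, the sketch is sound.
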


Several of the standard proofs of the Kochen--Stone lemma~\cite{yan,chandra} admit trivial modifications to yield the conditional result. One route,
following~\cite{yan} in the unconditional case, 
proceeds via a conditional version of the Paley--Zygmund inequality and then a conditional version of the Chung--Erd\H os lemma.
We omit the details.

\begin{proof}[Proof of Lemma~\ref{lem:positive-prob}.]
To apply Lemma~\ref{lem:kochen-stone}, we will obtain a lower bound for $\Pr ( A_{n,x} \mid \cF_n)$ and an upper bound for
$\Pr (A_{n,x} \cap A_{n,x+y} \mid \cF_n )$. 
Fix $n \in \ZP$. Suppose that $x \geq a_n :=   \max (x_1,X_0+2Bn)$, where $x_1$ is the constant in Lemma~\ref{lem:proest}.
Define events
\[ D_x = \{  \tau_{\eta_{n,x}+2\ell,x+\ell\eps} = \infty \}, ~ E_x = \bigcap_{i=1}^{2\ell} E_{n,i,x},  ~\text{and}~
F_{x,y} = \{ \eta_{\eta_{n,x}+2\ell,x+y} < \tau_{\eta_{n,x}+2\ell,x+ \ell \eps} \} ;\]
here we omit the $n$-dependence from the notation to make it less cumbersome, and since we keep $n$ fixed throughout the argument.
Note that $A_{n,x} = D_x \cap E_x$.
Then, since $\eta_{n,x} \geq n$ and $E_x \in \cF_{\eta_{n,x}+2\ell}$,
\[ \Pr(A_{n,x} \mid \cF_n)  =
 \Exp \bigl[ \2{E_x} \Pr (  D_x  \mid \cF_{\eta_{n,x}+2\ell} ) \bigmid \cF_n \bigr] .\]
Provided $x > X_0+Bn$, we have  by~\eqref{ass:bounded-jumps} that $\max_{0 \leq m \leq n} X_m <x$
and hence
 $x \leq X_{\eta_{n,x}} \leq x + B$. Thus 
on the event $E_x$ we have that  $x + 2\ell\eps \leq X_{\eta_{n,x}+ 2\ell} \leq x+ (2\ell+1) B$, so we can apply~\eqref{eq:calp1} in Lemma~\ref{lem:proest} 
to obtain $k_1/ x \leq \Pr (  D_x  \mid \cF_{\eta_{n,x}+2\ell} ) \leq k_2 /x$ on $E_x$, where $k_1, k_2 \in (0,\infty)$
do not depend on~$x > X_0 + Bn$ or on~$n$. Thus, for all $x \geq a_n$,
\[ \frac{k_1}{x} \Pr ( E_x \mid \cF_n ) \leq \Pr(A_{n,x} \mid \cF_n)  \leq \frac{k_2}{x}  .\]
Here  Lemma~\ref{lem:ellipticity} and repeated conditioning shows that
$\Pr ( E_x \mid \cF_n ) \geq \eps^{2\ell}$, on $\{ X_n \geq y_0 \}$.
 We conclude that,
for some constants $c_1, c_2 \in (0,\infty)$, all $n \in \ZP$, and all $x \geq a_n$, 
\begin{equation}
\label{eq:prAx-lower-bound}
\frac{c_1}{x} \leq \Pr(A_{n,x} \mid \cF_n) \leq \frac{c_2}{x}, \text{ on } \{ X_n \geq y_0 \} . \end{equation}

On the other hand, 
by~\eqref{ass:nonc}, $\eta_{\eta_{n,x}+2\ell,x+y} < \infty$, a.s.,
so that, for $x, y \in \RP$ and $n \in \ZP$, 
\[ A_{n,x} \cap A_{n,x+y} = E_x \cap D_{x} \cap E_{x+y} \cap D_{x+y}  = E_x \cap F_{x,y} \cap E_{x+y} \cap D_{x+y}  , \]
up to sets of probability zero.
Suppose that $x \geq a_n$ and $y \geq b:= \lceil (2 \ell + 1) B \rceil$.
Then~\eqref{ass:bounded-jumps} implies that $\eta_{n,x+y} \geq \eta_{n,x} +2\ell$, and so
 also $\eta_{\eta_{n,x}+2\ell,x+y} = \eta_{n,x+y}$, since the walk cannot reach $[x+y, \infty)$
until after time $\eta_{n,x} + 2\ell > n$.
In particular, $F_{x,y} \in \cF_{\eta_{n,x+y}}$, and so
\begin{align*}
& {} \Pr  (A_{n,x} \cap A_{n,x+y} \mid \cF_n  )\\
& {} \qquad {} = \Exp \Bigl[ \Exp \bigl[ \2 {E_x}  \Exp \bigl[ \2{F_{x,y}} \Exp \bigl[ \2 { E_{x+y} } \Pr ( D_{x+y} \mid \cF_{\eta_{n,x+y}+2\ell} )  \bigmid \cF_{\eta_{n,x+y}} \bigr] \bigmid \cF_{\eta_{n,x}+2\ell} \bigr] \bigmid \cF_{\eta_{n,x}} \bigr] \Bigmid \cF_n \Bigr] .
\end{align*} 
It follows from Lemma~\ref{lem:proest} that, on the event $E_{x+y}$, $\Pr ( D_{x+y} \mid \cF_{\eta_{n,x+y}+2\ell} ) \leq \frac{k_2}{x+y}$. Thus
\[  \Pr  (A_{n,x} \cap A_{n,x+y} \mid \cF_n  ) \leq \frac{k_2}{x+y} 
\Exp \Bigl[ \Exp \bigl[ \2 {E_x}  \Pr ( F_{x,y} \mid \cF_{\eta_{n,x}+2\ell} ) \bigmid \cF_{\eta_{n,x}} \bigr] \Bigmid \cF_n \Bigr] .\]
Similarly, on the event $E_x$ we have from Lemma~\ref{lem:proest} that $\Pr ( F_{x,y} \mid \cF_{\eta_{n,x}+2\ell} ) \leq \frac{k_2 (x+y)}{xy}$, so 
\begin{equation}
\label{eq:prAxy-upper-bound} \Pr  (A_{n,x} \cap A_{n,x+y} \mid \cF_n  ) \leq \frac{c_3}{x y} , \end{equation}
for some $c_3 < \infty$, all $n \in \ZP$, and all $x, y$ with $x \geq a_n$  and $y \geq b$.

Consider the numerator in Lemma~\ref{lem:kochen-stone}.
From the lower bound in~\eqref{eq:prAx-lower-bound} we have that for all $n \in \ZP$, all $x \geq a_n$, and all $y \geq 1$,
\[ \Pr (A_{n,qx} \mid \cF_n ) \Pr ( A_{n,q(x+y)} \mid \cF_n ) \geq \frac{c_4}{x(x+y)}, \text{ on } \{ X_n \geq y_0 \} , \]
where $c_4 >0$ depends on~$q$. 
It follows that, on $\{ X_n \geq y_0 \}$,
\begin{align*}
 \sum_{x=a_n}^m \sum_{y=1}^{m-x} \Pr (A_{n,qx} \mid \cF_n ) \Pr ( A_{n,q(x+y)} \mid \cF_n )
& \geq c_4 \sum_{x=a_n}^m \sum_{y=1}^{m-x} \frac{1}{x(x+y)}  \\
& = c_4 \sum_{w=a_n+1}^m \sum_{x=a_n}^{w-1} \frac{1}{xw} \\
& \geq c_4  \sum_{w=a_n+1}^m \frac{\log w}{w} - C_n \sum_{w=1}^m \frac{1}{w} ,\end{align*}
for some $\cF_0$-measurable $C_n < \infty$ depending on $a_n$ but not on~$m$. 
Thus we get
\begin{equation}
\label{eq:kochen-stone-numerator}
  \sum_{x=a_n}^m \sum_{y=1}^{m-x} \Pr (A_{n,qx} \mid \cF_n ) \Pr ( A_{n,q(x+y)} \mid \cF_n )
\geq c_5 \log^2 m - C_n \log m ,\end{equation}
for all $m$ sufficiently large, 
where $c_5 >0$ depends neither on  $n$ nor $m$, and $C_n$ does not depend on $m$.
We turn to the denominator in Lemma~\ref{lem:kochen-stone}.
From~\eqref{eq:prAxy-upper-bound} and
 the upper bound in~\eqref{eq:prAx-lower-bound} we have that, for all $n \in \ZP$,  
\begin{align*}
 \sum_{x=a_n}^m \sum_{y=1}^{m-x} \Pr (A_{n,qx} \cap A_{n,q(x+y)} \mid \cF_n ) 
& \leq \sum_{x=a_n}^{m} \sum_{y=1}^b \Pr (A_{n,qx} \mid \cF_n) \\
& {} \qquad {} + \sum_{x=a_n}^m \sum_{y=b}^{m-x} \Pr (A_{n,qx} \cap A_{n,q(x+y)} \mid \cF_n ) \\
& \leq c_6 \sum_{x=1}^m \frac{1}{x} + c_6 \sum_{x=1}^m \sum_{y=1}^{m} \frac{1}{xy}, \end{align*}
where $c_6 < \infty$ depends on $b$ and~$q$.
It follows that, for all $m \geq 2$, say,
\begin{equation}
\label{eq:kochen-stone-denominator}
\sum_{x=a_n}^m \sum_{y=1}^{m-x} \Pr (A_{n,x} \cap A_{n,x+y} \mid \cF_n )  \leq c_7 \log^2 m ,
\end{equation}
for $c_7 < \infty$ depending neither on $n$ nor $m$.

To finish the proof, we apply Lemma~\ref{lem:kochen-stone}
with the bounds~\eqref{eq:kochen-stone-numerator} and~\eqref{eq:kochen-stone-denominator} to get
\[ \Pr ( A_{n,qx} \io \mid \cF_n) \geq \limsup_{m \to \infty} \frac{ c_5 \log^2 m - C_n \log m }{c_7 \log^2 m} = \frac{c_5}{c_7}  , \as ,\]
where $c_5/c_7$ is non-random, positive, and does not depend on~$n$.
\end{proof}
 
\begin{proof}[Proof of Theorem~\ref{thm:infcp}.]
The proof is finished by an argument similar to~\cite[p.~672]{jp}.
Let $\cF_\infty = \sigma ( \cup_{n \geq 0} \cF_n )$.
Let $I^\infty_{h,k} \in \cF_\infty$ denote the event that there are infinitely many disjoint   $(h,k)$ cut intervals.
As argued earlier (see  Lemma~\ref{lem:Ax} and the subsequent paragraph), if $A_{n,qx}$ occurs for infinitely many~$x$, then $I^\infty_{h,k}$ occurs. Thus
by Lemma~\ref{lem:positive-prob}, for all $n \in \ZP$,
\[ \Pr ( I^\infty_{h,k} \mid \cF_n ) \geq \Pr ( A_{n,qx} \io \mid \cF_n ) \geq \delta, \as \]
Then, by L\'evy's zero--one law (see e.g.~Theorem~5.5.8 of~\cite{durrett}), we have
\begin{equation*}
0 < \delta \leq \lim_{n \to \infty} \Pr ( I^\infty_{h,k} \mid \cF_n ) = \Pr ( I^\infty_{h,k}   \mid \cF_\infty  ) = \2  { I^\infty_{h,k}  }, \as
\end{equation*}
Hence the indicator must be equal to 1, a.s., so $\Pr (  I^\infty_{h,k}  ) = 1$.

Finally, suppose that $\Exp X_0 < \infty$. By Lemma~\ref{lem:Ax}, the expected number of disjoint $(h,k)$ cut intervals in $[0,x]$ is (taking $n=0$ in the definition of $A_{n,qy}$) at least
\begin{equation}
\label{eq:cutpoints-lower-bound}
 \Exp \sum_{y \in \N : X_0 < qy < x} \2 { A_{0,qy}   } \geq \Exp \sum_{y \in \N : 0 < qy < x}  \2 { A_{0,qy}   } -\Exp  X_0, \end{equation}
which, by~\eqref{eq:prAx-lower-bound}, is at least $c \log x$
for some $c>0$ and all $x$ sufficiently large.
\end{proof}

\begin{proof}[Proof of Proposition~\ref{prop:exp-infinite}.]
Similarly to the corresponding part of the proof of Theorem~\ref{thm:infcp},
the expected number of disjoint $(h,k)$ cut intervals in $[0,x]$ is bounded below by~\eqref{eq:cutpoints-lower-bound},
and a similar argument to that for the lower bound in~\eqref{eq:prAx-lower-bound},
but now using~\eqref{eq:calp4}, shows 
\[ \Pr ( A_{n,x}  )  \geq \frac{c_1}{x \log x} ,\]
for some $c_1 >0$ and all $x$ sufficiently large.
\end{proof}

\section{Proof of Theorem~\ref{thm:fcp}: Finitely many cutpoints}
\label{sec:proof2}

To show that there are only finitely many cutpoints, one might initially seek to apply the `easy' half of the Borel--Cantelli lemma.
However, the probability estimates of Lemma~\ref{lem:proest2} give an upper bound
on the probability of finding a cutpoint around $x$ of order $1/(x\log x)$, which is not summable. So some additional work is needed.
The basic idea that we adapt goes back to~\cite[\S 3]{jlp}, and was carried forward by~\cite[\S 3]{cfr}. We explain it now.

Let $x \in \N$.
Define intervals $I_x = [ \frac{x}{2} , 2x ]$ and $J_x = [ x, 2x ]$.
Let $E_x$ denote the event that there is at least one cutpoint in $J_x$:
\[ E_x := \{ \# ( \cC \cap J_x ) \geq 1 \} .\]
Recall the definition of the set $\cS$ of separating points from Definition~\ref{def:cuttimes}, and set
 \begin{equation}
\label{eq:M-F}
 M_x := | \cS \cap I_x | , \text{ and } F_x := \{ \cS \cap [ x-1, x] = \emptyset \} .\end{equation}
We will make use of the simple inequality
\begin{equation}
\label{eq:M-conditional}
 \Exp (M_x \mid \cF_0) \geq \Exp ( M_x \2 {E_x } \mid \cF_0) ,\end{equation}
by obtaining an upper bound for the expectation of $M_x$ and
a lower bound for $M_x$ on the event $E_x$. For the latter,
the idea (following~\cite{jlp}) is that if there is one cutpoint (at $r \in J_x$, say)
then there tend to be many more, since for $y < r$ to be a cutpoint one needs
to visit $r$ before returning to $y$ after the first visit to~$y$. However,
some care is needed in this argument, and it is here that we
need to use the Markov property to ensure that the future and the past are independent.

We will need the following estimate on the probability of
first entering $[x+y,\infty)$ at the point $x+y \in \cX$, before
returning to $[0,x]$, starting from not too close to $x$.

\begin{lemma}
\label{lem:fcp}
Suppose that~\eqref{ass:markov},~\eqref{ass:bounded-jumps}, and~\eqref{ass:variance} hold. 
Suppose also that there exists $x_0 \in \RP$  such that $\mu_1(x) \ge 0$
for all $x \ge x_0$. 
Let $\delta > 0$.
Then there exist $c >0$ and $y_0 \in \RP$ such that,
for all $x,y$ with $x \in \cX$, $x \geq x_0$, $x+y \in \cX$, and $y \geq y_0$,
\[ \Pr ( \eta_{n,x+y} < \tau_{n,x}, \, X_{\eta_{n,x+y}} = x+y \mid \cF_n ) \geq \frac{c}{y} , \text{ on } \{ x+ \delta < X_n \leq x+y \} .\]
\end{lemma}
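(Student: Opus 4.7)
The plan is to construct explicit good paths realizing the target event with total probability $\gtrsim 1/y$. Using the enumeration of $\cX$ in~\eqref{ass:markov}, write $x+y = s_{k+1}$, so $s_k$ is its predecessor in $\cX$. Fix $L := B + 1$. The strategy splits into two parts: (a) before returning to $[0,x]$, the walk reaches some ``launch pad'' state $z = s_j \in \cX \cap [x+y-L, x+y)$; (b) starting from $z$, the walk climbs the ladder $s_j, s_{j+1}, \ldots, s_{k+1} = x+y$ via iterated application of~\eqref{eq:one-step}, landing at time $\eta_{n,x+y}$ precisely at $x+y$ and staying inside $(x, x+y]$ throughout the climb.

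For part (a), set $\sigma' := \tau_{n,x} \wedge \eta_{n, x+y-L}$. Since $\mu_1 \ge 0$ on $\{X_m \ge x_0\}$ and $X_m > x \ge x_0$ for $m < \sigma'$, the stopped process $(X_{m \wedge \sigma'})$ is a bounded submartingale. Optional stopping, together with $X_{\sigma'} \le x$ on $\{\sigma' = \tau_{n,x}\}$ and $X_{\sigma'} \le x+y-1$ on $\{\sigma' = \eta_{n, x+y-L}\}$ (by~\eqref{ass:bounded-jumps} and $L = B+1$), yields
\[ \Pr\bigl( \eta_{n, x+y-L} < \tau_{n,x} \bigmid \cF_n \bigr) \ge \frac{X_n - x}{y-L+B} \ge \frac{\delta}{y-L+B} \ge \frac{c_1}{y} \]
on $\{x+\delta < X_n < x+y-L\}$ for $y$ sufficiently large; the case $X_n \ge x+y-L$ is trivial (take $\eta_{n, x+y-L} = n$). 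At time $\eta_{n, x+y-L}$, we have $X_{\eta_{n, x+y-L}} = s_j$ for some $j \le k$.

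For part (b), uniform local finiteness~\eqref{eq:ulf} bounds $\#(\cX \cap [x+y-L, x+y])$ by a constant $N_0$ depending only on $L$ and $\cX$, so $k+1-j \le N_0$. By the strong Markov property at $\eta_{n, x+y-L}$ and iterated application of~\eqref{eq:one-step} at $s_j, s_{j+1}, \ldots, s_k$ (all with indices $\ge k_0$, provided $y_0$ is large enough that $x+y-L \ge s_{k_0}$), with probability at least $\delta_0^{N_0}$ the walk takes at most $N_0 m_0$ consecutive steps, ending at $s_{k+1} = x+y$, with the trajectory bounded above by $s_k < x+y$ at all intermediate steps and below by $s_j - m_0 B \ge x+y-L-m_0 B$. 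For $y \ge y_0 := L + m_0 B + 1$, the lower bound exceeds $x$, so $[0,x]$ is not visited during the climb. Hence $\eta_{n, x+y}$ is precisely the final step of the climb and $X_{\eta_{n,x+y}} = x+y$. Combining (a) and (b) via strong Markov gives the bound $c/y$ with $c := c_1 \delta_0^{N_0}$.

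The main delicate point is the iterated application of~\eqref{eq:one-step}: ensuring the intermediate trajectory stays trapped in $(x, x+y]$ relies on the ``max $\le s_k$'' clause of~\eqref{eq:one-step} at each rung (for the upper bound) and on~\eqref{ass:bounded-jumps} with $y \ge y_0$ (for the lower bound). Crucially, the number of rungs is bounded uniformly in $x, y$ by the local-finiteness constant $N_0$, so the climbing probability $\delta_0^{N_0}$ remains a positive constant and the overall bound is of the required order $1/y$ rather than something exponentially small.
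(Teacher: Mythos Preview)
Your proof is correct and follows essentially the same two-stage strategy as the paper: use the submartingale property (from $\mu_1 \geq 0$) and optional stopping to reach a launch pad within $O(1)$ of $x+y$ with probability $\gtrsim 1/y$, then iterate~\eqref{eq:one-step} (with the number of rungs bounded via uniform local finiteness) to land exactly at $x+y$ with uniformly positive probability, staying above $x$ throughout by~\eqref{ass:bounded-jumps}. The only minor difference is that the paper inserts a pigeonhole step to fix a single $\cF_n$-measurable landing site $w$ before climbing, whereas you climb from whatever state $s_j$ the process actually lands in and invoke the strong Markov property; since the climbing cost $\delta_0^{N_0}$ is uniform in $s_j$, your route is slightly more direct.
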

\begin{proof}
Take $x \geq x_0$, fix $n \in \ZP$, and let $z = y- B$, so $z >0$ whenever $y \geq y_0 > B$. 
Set \[ Y_m =   X_{m \wedge \tau_{n,x} \wedge \eta_{n,x+z} }  \1 { x \leq X_n \leq x+y } , \text{ for } m \geq n. \]
Then, since $\mu_1 (u) \geq 0$ for all $u \geq x$,
 $(Y_m ; m \geq n)$
is a non-negative submartingale, which, by~\eqref{ass:bounded-jumps}, is bounded above by $x+z+B$, 
with $\lim_{m \to \infty} Y_m =  X_{\tau_{n,x} \wedge \eta_{n,x+z} }$ on $\{x \leq X_n \leq x+y\}$.
So, by optional stopping, on $\{ x \leq X_n \leq x+y\}$,
\[ X_n \leq \Exp ( X_{\tau_{n,x} \wedge \eta_{n,x+z}}  \mid \cF_n )
\leq x  \Pr (   \eta_{n,x+z} > \tau_{n,x} \mid \cF_n )  + (x+z+B) \Pr (   \eta_{n,x+z} < \tau_{n,x} \mid \cF_n ) .\]
Thus
\[ \Pr (   \eta_{n,x+z} <  \tau_{n,x} \mid \cF_n ) \geq \frac{\delta}{z+B} , \text{ on } \{ x +\delta < X_n \leq x+y \} .\]
By assumption~\eqref{eq:ulf},
$\# ( \cX \cap [ a, a +B ] ) \leq K < \infty$ for all $a \in \RP$ and some constant $K$. 
Thus
there exists an $\cF_n$-measurable $w \in \cX$ with $x+z \leq w \leq x+z+B$
for which
\begin{equation}
\label{eq:K-bound}
  \Pr (   \eta_{n,x+z} < \tau_{n,x} ,\, X_{\eta_{n,x+z}} = w \mid \cF_n ) 
\geq \frac{\delta}{K (z+B)} , \text{ on } \{ x +\delta < X_n \leq x+y \} .\end{equation}
There are at most $K$ points of $\cX$ in the interval $[w,x+y]$, including $w$ and $x+y$; list them in order as $w =x_0 < x_1 <\cdots < x_k = x+y$, where $k \leq K-1$.
Define $u_j = \sum_{i=0}^{j-1} m (x_i)$ where $m(w) \leq m_0$ is as in~\eqref{ass:markov}.
Then define 
the event
\[ F_{n,x,y} = \left( \bigcap_{j=1}^k \left\{ X_{\eta_{n,x+z} + u_j} =x_j \right\} \right) \cap \left\{ \max_{0 \leq \ell < u_k} X_{\eta_{n,x+z}+\ell} < x+y \right\} .\]
By application of~\eqref{eq:one-step}, we see that, on $\{ X_{\eta_{n,x+z}} = w \}$,
\begin{equation}
\label{eq:k-steps}
 \Pr ( F_{n,x,y} \mid \cF_{\eta_{n,x+z}} ) \geq \delta_0^{K} ,\end{equation}
uniformly in $w$,
for $y \geq y_0$ sufficiently large. If  $F_{n,x,y}$ occurs, then after time $\eta_{n,x+z}$,
the process (i) visits $x+y$ without entry into $(x+y,\infty)$, and (ii) by~\eqref{ass:bounded-jumps},
does not return to $[0, x+ z - B K m_0]$ before it reaches $x+y$.
In particular, taking $z \geq B K m_0$ and combining~\eqref{eq:K-bound} and~\eqref{eq:k-steps}, we get
\[  \Pr ( \eta_{n,x+y} < \tau_{n,x}, \, X_{\eta_{n,x+y}} = x+y \mid \cF_n ) \geq \delta_0^{K} \frac{\delta}{K y} , \text{ on } \{ x +\delta < X_n \leq x+y \} ,\]
which completes the proof.
\end{proof}

\begin{proof}[Proof of Theorem~\ref{thm:fcp}.]
We first get an upper bound for the left-hand side of~\eqref{eq:M-conditional}.
Recall that $F_x$ as defined at~\eqref{eq:M-F} is the event that $[x-1,x] \cap \cS = \emptyset$.
For $x \in \RP$, let $\eta_x := \min \{ n \in \ZP : X_n \geq x \}$.
If $\tau_{\eta_{x},x-1} < \infty$, then $X$
returns to $[0,x-1]$ after entering $[x,\infty)$, which implies $F_x$.
Since, by~\eqref{ass:bounded-jumps}, $x \leq X_{\eta_{x}} \leq x +B$ for all $x > X_0$, we may apply Lemma~\ref{lem:proest2}(a) at the stopping time
$\eta_{n,x}$ to obtain
\[ \Pr \bigl( F^\rc_x \bigmid \cF_{\eta_{n,x}} \bigr) \leq \Pr \bigl( \tau_{\eta_{n,x},x-1} = \infty \bigmid \cF_{\eta_{n,x}} \bigr)
\leq \frac{C}{x \log x} ,\]
for some constant $C<\infty$ and all $x > X_0$.
Thus there exists a constant $C<\infty$ for which
\begin{equation}
\label{eq:M-bound1}
 \Exp ( M_x \mid \cF_0 ) \leq   \sum_{y \in \N : [y-1,y ] \cap I_x \neq \emptyset} \Pr ( F^\rc_y \mid \cF_0 )
\leq \frac{C}{\log x} ,\end{equation}
for all $x > 2X_0$.

Next we establish a lower bound for the right-hand side of~\eqref{eq:M-conditional}.
If $E_x$ occurs, set $R_x := \sup ( \cC \cap J_x )$.
Since $\cC \subseteq \cX$ is locally finite, the set $\cC \cap J_x$
is finite and so $R_x \in \cC$ is the rightmost cutpoint in $J_x$.
If $E_x$ does not occur, set $R_x = \infty$.
Then we can write
\[ \Exp ( M_x \2 {E_x} ) = \sum_{r \in \cX \cap J_x} \Exp ( M_x \1 {R_x = r} ) .\]
If $X_{\eta_r} > r$ and $R_x = r$,
 then $r \in \cC$ and so $X_n = r$ for some $n > \eta_r$. But this contradicts the fact that
$r \in \cC$. Thus we have established that 
\begin{equation}
\label{eq:hits-r}
 \{ R_x = r \} \subseteq \{ X_{\eta_r} = r \} ,\end{equation}
up to events of probability zero. 

Let $\eps>0$ be the constant in Lemma~\ref{lem:ellipticity},
and choose $\ell \in \N$ with $\ell \eps > 1$.
For $r \in \cX$ and $y >0$ with $y + 2 \ell B < r$,
let $F_{y,r}$ denote the event
\[ F_{y,r}  = \left( \bigcap_{m = \eta_y}^{\eta_y + \ell} \{ \Delta_m > \eps \} \right)
\cap \left\{ \eta_{\eta_y+\ell+1,r} < \tau_{\eta_y+\ell+1,y+1} \right\} .\]
If $F_{y,r}$ occurs, then on the first visit to $[y,\infty)$,
the process  proceeds via positive steps to $[y+1,\infty)$ and then
visits~$[r,\infty)$ before returning to $[0,y+1]$.
In particular, $F_{y,r} \cap \{ R_x = r \}$ implies that
$(y,y+1) \subseteq \cS$. 
Let $\cY_{x,r} = \{ y_1, \ldots, y_k \}$ be a subset of $\cX$ contained in $I_x \cap [ 0, r - 2 \ell B]$
such that $y_1 < \cdots < y_k$ satisfy $y_1 < \frac{x}{2} +B$, $y_k > r - 2\ell B - B$,
and $1 < y_i - y_{i-1} \leq 2+B$ for all $2 \leq i \leq k$. Existence
of suitable $y_i$ is assured by~\eqref{ass:bounded-jumps} and~\eqref{ass:markov}: $y_1$ can be the
 first point of $\cX$ to the right of $x/2$, and given $y_{i}$, $i \geq 1$, we can take for $y_{i+1}$ the first point of $\cX$ to the right of $y_{i}$
at distance greater that 1. Note that $k$ is bounded below by a constant times $x$ for all $x$ sufficiently large.
Then intervals $[y,y+1]$ are disjoint for different $y \in \cY_{x,r}$, and so
\begin{align}
\label{eq:M-lower1}
  \Exp ( M_x \1 {R_x = r} \mid \cF_0 )
& \geq \Exp \biggl[ \sum_{y \in \cY_{x,r},\, y > X_0 } \Exp ( \2 { F_{y,r}} \1 {R_x = r} \mid \cF_{\eta_r} ) \biggmid \cF_0 \biggr]  \nonumber\\
& \geq \Exp \biggl[ \sum_{y \in \cY_{x,r},\, y > X_0 }   \2 { F_{y,r}}  \Pr ( R_x = r \mid \cF_{\eta_r} ) \biggmid \cF_0 \biggr] ,\end{align}
since~\eqref{ass:bounded-jumps}
means that, provided $y > X_0$ and $y + 2 \ell B < r$, $X_{\eta_y} \leq y + B$ and $\eta_r = \eta_{\eta_y+\ell+1,r} \geq \eta_y +\ell+1$, so that
 $F_{y,r} \in \cF_{\eta_r}$.
Now 
the strong Markov property implies that
$\Pr ( R_x = r \mid \cF_{\eta_r} ) = h ( X_{\eta_r} )$, a.s.,
for some measurable function~$h$ with $\Pr ( R_x = r \mid X_{\eta_r} = z ) = h(z)$.
But~\eqref{eq:hits-r} shows that $h ( z ) = 0$ unless $z = r$,
so $\Pr ( R_x = r \mid \cF_{\eta_r} ) = h ( r ) \1 { X_{\eta_r} = r }$.
Thus from~\eqref{eq:M-lower1} we get
\begin{align*}
 \Exp ( M_x \1 {R_x = r} \mid \cF_0 )
& \geq h (r ) \sum_{y \in \cY_{x,r}, \, y > X_0 }   \Pr (  F_{y,r} \cap \{ X_{\eta_r} = r \} \mid \cF_0 )  \\
& \geq  c h(r)  \sum_{y \in \cY_{x,r}, \, y > X_0  } \frac{1}{r-y} ,\end{align*}
by Lemmas~\ref{lem:ellipticity} and~\ref{lem:fcp}, where $c>0$ is a constant, and $x \geq x_0$.
If $x > 2X_0$, then set $\cY_{x,r}$, taken in reverse order, consists of order $x$ points all of comparable spacing started a constant distance from~$r$,
so we get
$\Exp ( M_x \1 {R_x = r} \mid \cF_0 ) \geq c h(r) \log x$
for all $x > \max(x_0,2X_0)$,
where $c>0$ is again a positive constant. It follows 
that
\[ \Exp ( M_x \2 {E_x} \mid \cF_0) \geq c \sum_{r \in \cX \cap J_x} h(r) \log x, \text{ for all } x > \max(x_0,2X_0) .\]
On the other hand, by a similar argument,
\[ \Pr ( E_x \mid \cF_0 ) = \sum_{r \in \cX \cap J_x} \Pr ( {R_x = r} \mid \cF_0 ) \leq \sum_{r \in \cX \cap J_x} h(r) ,\]
so that
\begin{equation}
\label{eq:M-bound2}
 \Exp ( M_x \2 {E_x} \mid \cF_0)  \geq c \Pr ( E_x \mid \cF_0 ) \log x, \text{ for all } x > \max(x_0,2X_0) .\end{equation}
Combining~\eqref{eq:M-bound1} and~\eqref{eq:M-bound2}, we obtain from~\eqref{eq:M-conditional} that
\[ \Pr ( E_x \mid \cF_0) \leq \frac{C}{\log^2 x } ,\]
for some $C < \infty$ and all $x > \max(x_0,2X_0)$.
Applied along the sequence $x= 2^k$, $k \in \N$,
the (conditional) Borel--Cantelli lemma then shows that
$E_{2^k}$ occurs for only finitely many~$k$, a.s.
The sets $J_{2^k}$, $k \in \N$, cover $[1,\infty)$ and thus $\# \cC < \infty$, a.s.
\end{proof}

\section*{Acknowledgements}

The main part of this work was done while CHL was affiliated to the School of Mathematics, 
University of Edinburgh. The authors gratefully acknowledge an anonymous referee, whose careful reading of the paper and suggestions prompted several corrections and clarifications.


\begin{thebibliography}{99}

\bibitem{bgs}
I.\ Benjamini, O.\ Gurel-Gurevich and O.\ Schramm,
 Cutpoints and resistance of random walk paths.
\emph{Ann.\ Probab.}\ {\bf 39} (2011) 1122--1136.

\bibitem{chandra}
T.K.\ Chandra, \emph{The Borel--Cantelli Lemma.} Springer, New York, 2012.

\bibitem{cfr} 
E.\ Cs\'aki, A.\ F\"oldes and P.\ R\'ev\'esz, On the number of cutpoints of the transient nearest neighbor random walk on the line.
\emph{J.\ Theoret.\ Probab.}\ {\bf 23} (2010) 624--638.

\bibitem{durrett}
R.\ Durrett, 
\emph{Probability: Theory and Examples.} 4th ed., Cambridge University Press, Cambridge, 2010.

\bibitem{dek}
A.\ Dvoretzky, P.\ Erd\H os and S.\ Kakutani, Double points of paths of Brownian motion in $n$-space.
\emph{Acta Sci.\ Math.\ Szeged.}\ {\bf 12} (1950) 75--81.

\bibitem{et}
P.\ Erd\H os and S.J.\ Taylor, Some intersection properties of random walk paths.
\emph{Acta Math.\ Sci.\ Hung.}\ {\bf 11} (1960) 231--248.

\bibitem{gmmw}
N.\ Georgiou, M.V.\ Menshikov, A.\ Mijatovi\'c, and A.R.\ Wade, 
Anomalous recurrence properties of many-dimensional zero-drift random walks.
\emph{Adv.\ in Appl.\ Probab.}\ {\bf 48A} (2016) 99--118.

\bibitem{jlp}
N.\ James, R.\ Lyons and Y.\ Peres, A transient Markov chain with finitely many cutpoints.
pp.~24--29 in \emph{Probability and Statistics: Essays in Honor of David A.\ Freedman}, Institute of Mathematical Statistics, 2008.

\bibitem{jp}
N.\ James and Y.\ Peres, Cutpoints and exchangeable events for random walks.
\emph{Theory Probab.\ Appl.}\ {\bf 41} (1997) 666--677. Translated from \emph{Teor.\ Veroyatnost.\ i Primenen.}\ {\bf 41} (1996) 854--868 (in Russian).

\bibitem{ks}
S.\ Kochen and C.\ Stone, A note on the Borel--Cantelli lemma.
\emph{Ill.\ J.\ Math.}\ {\bf 8} (1964) 248--251.

\bibitem{lamp1} J.\ Lamperti, Criteria for the recurrence and transience of stochastic processes I,
\emph{J.\ Math.\ Anal.\ Appl.}\ {\bf 1} (1960) 314--330.

\bibitem{lamp3} J.\ Lamperti, Criteria for stochastic processes.\ II.\ Passage-time moments,
  \emph{J.\ Math.\ Anal.\ Appl.}\ {\bf 7} (1963) 127--145.

\bibitem{lawlerbook}
G.\ Lawler, \emph{Intersections of Random Walks.} Birkh\"auser, Boston, 1991.

\bibitem{lawler96}
G.F.\ Lawler, Cut times for simple random walk.
\emph{Electron.\ J.\ Probab.} {\bf 1} (1996) paper no.~13.

\bibitem{lawler02}
G.F.\ Lawler, Cut times for Brownian motion and random walk.
pp.~411--421 in \emph{Paul Erd\H os and his Mathematics}, Bolyai Society Mathematical Studies Vol.~11,
Budapest, 2002.

\bibitem{mai}
M.V.\ Menshikov, I.M.\ Asymont and R.\ Iasnogorodskii, 
Markov processes with asymptotically zero drifts.
\emph{Probl.\ Inf.\ Transm.}\ {\bf 31} (1995) 248--261. Translated from \emph{Problemy Peredachi Informatsii} {\bf 31} (1995) 60--75 (in Russian).

\bibitem{mpw}
M.\ Menshikov, S.\ Popov and A.\ Wade, \emph{Non-homogeneous Random Walks.} 
Cambridge University Press, Cambridge, 2016.

\bibitem{wang}
H.\ Wang,
On the number of points skipped by a transient (1,2) random walk on the lattice of the positive half line.
\emph{Markov Process.\ Related Fields} {\bf 25} (2019) 125--148.

\bibitem{yan}
J.-A.\ Yan, A simple proof of two generalized Borel--Cantelli lemmas.
pp.~77--79 in
\emph{S\'eminaire de Probabilit\'es XXXIX}, 
M.~\'Emery and M.~Yor (eds.), Lecture Notes in Mathematics Vol.~1874, Springer, New York, 2006.

\end{thebibliography}
\end{document}